\theoremstyle{definition}
\newtheorem{theorem}{Theorem}[section]
\newtheorem{proposition}[theorem]{Proposition}
\newtheorem{definition}[theorem]{Definition}
\newtheorem{lemma}[theorem]{Lemma}
\newtheorem{ex}[theorem]{Example}
\numberwithin{equation}{section}
\newcommand{\R}{\mathbb R}
\newcommand{\mz}{\mathcal Z}
\newcommand{\Z}{{\mathbb Z}}
\newcommand{\N}{{\mathbb N}}
\newcommand{\kk}{{\bf k}}
\newcommand{\mm}{{\bf m}}
\newcommand{\ck}{{\bf k^\prime}}
\newcommand{\C}{{\mathbb C}}
\newcommand{\Q}{{\mathbb Q}}
\DeclareMathOperator{\sign}{\operatorname{sign}}
\DeclareMathOperator{\YT}{\operatorname{YT}}
\DeclareMathOperator{\OYT}{\operatorname{OYT}}
\DeclareMathOperator{\SSYT}{\operatorname{SSYT}}
\DeclareMathOperator{\Cor}{\operatorname{Cor}}
\DeclareMathOperator{\F}{\operatorname{F}}
\newcommand{\vsmall}{\rotatebox[origin=c]{-90}{$<$}}
\title{Interpolated Schur multiple zeta values}
\author{Henrik Bachmann\footnote{email : henrik.bachmann@math.nagoya-u.ac.jp, Nagoya University}}
\begin{document}
\date{\today}
\maketitle

\begin{abstract} Inspired by a recent work of M.~Nakasuji, O.~Phuksuwan and Y.~Yamasaki we combine interpolated multiple zeta values and Schur multiple zeta values into one object, which we call interpolated Schur multiple zeta values. Our main result will be a Jacobi-Trudi formula for a certain class of these new objects. This generalizes an analogous result for Schur multiple zeta values and implies algebraic relations between interpolated multiple zeta values. 
\end{abstract}

\section{Introduction}

For $k_1,\dots,k_{r-1} \geq 1, k_r \geq 2$ the multiple zeta value $\zeta(k_1,\ldots,k_r)$ is defined by
\begin{equation} \label{eq:mzv}
\zeta(k_1,\ldots,k_r)=\sum_{0<m_1<\cdots<m_r} \frac{1}{m_1^{k_1}\cdots m_r^{k_r}} \,.
\end{equation}
The $\Q$-vector space spanned by all multiple zeta values will be denoted by $\mz$. Another version of these numbers are given by the multiple zeta-star values, defined by
\begin{equation} \label{eq:mzvstar}
\zeta^\star(k_1,\ldots,k_r)=\sum_{0<m_1\leq\cdots \leq m_r} \frac{1}{m_1^{k_1}\cdots m_r^{k_r}} \,.
\end{equation}
Every $\zeta$ can be written as a linear combination of $\zeta^\star$ and vice versa. In \cite{SY} Yamamoto introduced an interpolated version of these two real numbers, denoted by $\zeta^t(k_1,\ldots,k_r)$, which is a polynomial in $t$ with coefficients in $\mz$. To give its definition we first define for numbers $m_1\leq m_2 \leq \dots \leq m_r$ the number of their equalities by
\[e(m_1,\dots,m_r) = \sharp \left\{1\leq  i \leq r-1 \mid m_i = m_{i+1} \right\} \,. \]
With this the $\zeta^t$ are defined\footnote{In the original paper \cite{SY} a slightly different notation is used, but the $\zeta^t$ there are exactly the same as here.} by
\begin{equation}  \label{eq:mzvt}
\zeta^t(k_1,\ldots,k_r)=\sum_{0<m_1\leq\cdots \leq m_r} \frac{t^{e(m_1,\dots,m_r)}}{m_1^{k_1}\cdots m_r^{k_r}}\,.
\end{equation}
These are polynomials in $\mz[t]$, where the coefficient of $t^d$ is given by the sum of all $\zeta(k_1 \,\square\, k_2 \,\square\, \dots  \,\square\, k_r)$ with exactly $d$ of the $\square$ being a plus "$+$" and the other $\square$ being a comma "$,$". For example in the case $r = 3$ we have
\[\zeta^t(k_1,k_2,k_3) = \zeta(k_1,k_2,k_3)+ \big(  \zeta(k_1+k_2,k_3)+ \zeta(k_1,k_2+k_3) \big)\, t +  \zeta(k_1+k_2+k_3)\, t^2\,.\]

 By definition these interpolated versions satisfy $\zeta^0 = \zeta$ and $\zeta^1 = \zeta^\star$. Another generalization of multiple zeta and zeta-star values are given by Schur multiple zeta values, which were first introduced in \cite{NPY}.  There the authors proved Jacobi-trudi formulas for these Schur multiple zeta values and the  present work was inspired by this result.   
 Instead of introducing a parameter $t$, Schur multiple zeta values replace an index set $(k_1,\dots,k_r)$ by a Young tableau $\kk = (\lambda, (k_{i,j}))$. Here $\lambda$ is a partition of a natural number, i.e. a non-decreasing sequence  $(\lambda_1,\dots,\lambda_h)$ of non-negative integers and the $k_{i,j}$ are arbitrary integers for indices $(i,j)$ with $1 \leq i \leq h$ and $1 \leq j \leq \lambda_i$. We will make this more precise in the next Section. 
 The tuple $\kk$ can be represented by a Young tableau and we denote its conjugate by $\kk^\prime = (\lambda^\prime, (k_{j,i}))$. 
 For example for $\lambda=(3,3,2,1)$ it is $\lambda^\prime=(4,3,2)$ and we write
\[\kk = {\footnotesize \ytableausetup{centertableaux, boxsize=1.8em}
\begin{ytableau}
k_{1,1} & k_{1,2} & k_{1,3} \\
k_{2,1} & k_{2,2} & k_{2,3} \\
k_{3,1} & k_{3,2} \\
k_{4,1}
\end{ytableau} \qquad \text{and} \qquad \kk^\prime =\begin{ytableau}
k_{1,1} & k_{2,1}& k_{3,1}& k_{4,1}    \\
k_{1,2} & k_{2,2}  & k_{3,2}\\
k_{1,3} &  k_{2,3}
\end{ytableau} \, }\,.\]
In \cite{NPY} the authors\footnote{In \cite{NPY} the Schur multiple zeta function are denoted by $\zeta_\lambda({\bf s})$. We omit the subscript $\lambda$ here since we will consider their truncated version later, which will be denoted $\zeta_N$.} defines $\zeta(\kk) \in \R$ for such a Young tableau $\kk$. For example for $\lambda = (2,1)$ and numbers $a\geq 1,b,c \geq 2$ it is
\[\zeta\left({\footnotesize \ytableausetup{centertableaux, boxsize=1.1em}
	\begin{ytableau}
	a & b  \\
	c
	\end{ytableau}}\right) = \sum_{
	\arraycolsep=1.4pt\def\arraystretch{0.8}
	\begin{array}{ccc}
	0<&m_a &\leq m_b \\
   &\vsmall & \, \\
	&m_c & \, 
	\end{array} } \frac{1}{m_a^a\cdot  m_b^b \cdot m_c^c}  \,.\]
Schur multiple zeta values generalize multiple zeta and zeta-star values in the sense that we recover these in the special cases $\lambda=(1,\dots,1)$ and  $\lambda=(r)$, i.e. we have 
\[ \zeta(k_1,\dots,k_r) = \zeta\left({\footnotesize \ytableausetup{centertableaux, boxsize=1.5em}
	\begin{ytableau}
	k_1  \\
	\vdots \\
	k_r
	\end{ytableau}}\right) \quad \text{ and }\qquad \zeta^\star(k_1,\dots,k_r) =\zeta\left({\footnotesize \ytableausetup{centertableaux, boxsize=1.5em}
	\begin{ytableau}
	k_1 & \dots &k_r
	\end{ytableau}}\right) \,.   \]

In this note we combine the interpolated multiple zeta values and the Schur multiple zeta values into one object $\zeta^t(\kk) \in \mz[t]$, such that 

\[ \zeta^0(\kk) = \zeta(\kk) \quad \text{and}  \quad \zeta^t\left({\footnotesize \ytableausetup{centertableaux, boxsize=1.5em}
	\begin{ytableau}
	k_1  \\
	\vdots \\
	k_r
	\end{ytableau}} \right) = \zeta^t(k_1,\dots,k_r) \,.   \]

Moreover the transformation $t \rightarrow 1-t$ will correspond to conjugating the Young tableau, i.e. for example we will have 

\[\zeta^t\left({\footnotesize \ytableausetup{centertableaux, boxsize=1em}
	\begin{ytableau}
	a & b   \\
	c \\
	d
	\end{ytableau}} \right)  = \zeta^{1-t}\left({\footnotesize \ytableausetup{centertableaux, boxsize=1em}
	\begin{ytableau}
	a & c & d  \\
	b 
	\end{ytableau}} \right) \,.\]
 The main result of this work is the following Theorem on Jacobi-Trudi like formulas for interpolated Schur multiple zeta values:

\begin{theorem}\label{thm:thm1}
For a sequence  $(a_i)_{i\in \Z}$ of integers $a_i \geq 2$ and a Young tableau given by $\kk=(\lambda, (k_{i,j}))$ with $k_{i,j}=a_{j-i}$, we have the following identity
	\[ \zeta^t(\kk) = \det(\, \zeta^t(a_{j-1}, a_{j-2}, \dots , a_{j-(\lambda^\prime_i+j-i)}) \,)_{1\leq i,j \leq \lambda_1}\,,\]
	where we set $\zeta^t(a_{j-1}, a_{j-2}, \dots , a_{j-(\lambda^\prime_i+j-i)})$ to be $\begin{cases}
1 \, \text{  if  }  \lambda^\prime_i-i +j =0  \\
0 \, \text{  if  }  \lambda^\prime_i-i +j <0 
\end{cases}$.
\end{theorem}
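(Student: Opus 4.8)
The plan is to prove the formula by the Lindstr\"om--Gessel--Viennot (LGV) lattice-path method, following the Schur multiple zeta case treated in \cite{NPY} but carrying the interpolation parameter $t$ through the whole argument. I work from the combinatorial description of the left-hand side: writing $M=(m_{i,j})$ for a filling of the diagram $\lambda$ by positive integers that is weakly increasing along every row and every column, and letting $v(M)$ and $h(M)$ count the adjacent equalities $m_{i,j}=m_{i+1,j}$ down columns and $m_{i,j}=m_{i,j+1}$ along rows, one has
\[
\zeta^t(\kk)=\sum_{M} t^{\,v(M)}(1-t)^{h(M)}\prod_{(i,j)} m_{i,j}^{-k_{i,j}}.
\]
This weight reproduces the properties demanded in the Introduction: on a single column $h\equiv 0$ and the sum collapses to $\zeta^t(k_1,\dots,k_r)$; on a single row $v\equiv 0$ and it collapses to $\zeta^{1-t}$; at $t=0$ only strict columns survive, recovering $\zeta(\kk)$; and interchanging the roles of $v$ and $h$ is exactly the conjugation $t\mapsto 1-t$.

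The hypothesis $k_{i,j}=a_{j-i}$ is what makes this sum fit a path model, since the exponent attached to an entry depends only on the diagonal $j-i$ on which it sits, so the weight of a lattice path depends only on its shape and not on its position. I would read the $j$-th column of $M$ as a single weakly increasing chain whose generating function is the one-column interpolated value $\zeta^t(a_{j-1},a_{j-2},\dots,a_{j-n})$ with $n=\lambda'_i-i+j$; the vertical-equality weight $t^{v}$ is built into this one-column value, and the two degenerate conventions ($1$ for $n=0$ and $0$ for $n<0$) are exactly the empty and the forbidden chains. Expanding the determinant as $\sum_{\sigma}\sign(\sigma)\prod_i\zeta^t(\dots)$ then presents the right-hand side as a signed generating function over all families of such chains with sources $P_1,\dots,P_{\lambda_1}$ and sinks $Q_{\sigma(1)},\dots,Q_{\sigma(\lambda_1)}$.

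Next I would run the sign-reversing involution. If two chains of a family properly cross, locate the first crossing in a fixed scanning order and exchange their tails beyond it; this changes $\sigma$ by a transposition and flips the sign, while merely redistributing a fixed multiset of steps and hence preserving both $\prod m_{i,j}^{-k_{i,j}}$ and the vertical-equality weight $t^{v}$. Every properly crossing configuration is therefore annihilated, and the signed sum collapses onto families whose chains meet at most by touching, that is, by sharing a vertex. The leading such families, with $\sigma=\mathrm{id}$, are indexed by fillings $M$ weakly increasing along rows and columns and contribute $\sum_M t^{\,v(M)}\prod m_{i,j}^{-k_{i,j}}$, while the remaining touching families, coming from non-trivial $\sigma$, supply the correction terms.

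The main obstacle is precisely these touchings. At $t=0$, as in \cite{NPY}, the chains are genuinely vertex-disjoint and the involution is a clean bijection onto semistandard fillings; but for $t\neq 0$ the interpolation lets adjacent columns coincide, and a shared vertex is a horizontal equality that must carry the weight $1-t$ rather than $1$. The delicate point is to show that the signed contributions of the touching families assemble exactly into the factor $(1-t)^{h(M)}$: a touching at a given horizontal equality should be produced by the identity family with weight $1$ and partly cancelled by a neighbouring transposed family with weight $-t$, the two combining to $1-t$. I have checked directly for $\lambda=(2,1)$ that the properly crossing terms cancel and the touching terms reorganise precisely into the $(1-t)$-factors, so that the determinant equals $\zeta^t(\kk)$; carrying this local analysis of osculations to arbitrary $\lambda$, uniformly in $t$, is where the real work lies, after which identification with the combinatorial sum for $\zeta^t(\kk)$ is formal.
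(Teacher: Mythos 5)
You have the right framework (Lindstr\"om--Gessel--Viennot, the same starting point as the paper) and you have correctly isolated the crux, but the proposal has a genuine gap: the step you defer --- showing that the signed contributions of touching (osculating) families assemble into $(1-t)^{h(M)}$ for arbitrary $\lambda$ --- is the entire content of the theorem, and nothing in your sketch supplies a mechanism for it. Your tail-swap involution cancels properly crossing families, but what survives is then a signed sum over osculating families indexed by all permutations $\sigma$, and you give no rule pairing an identity-family osculation (weight $1$) with the specific transposed family (weight $-t$) that should combine with it, nor any argument that such a pairing can be made consistently when a filling has many horizontal equalities, possibly in adjacent columns or stacked in the same pair of columns. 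Checking $\lambda=(2,1)$ by hand does not indicate how this scales; in the plain one-vertex-per-point lattice this bookkeeping is exactly why the $t=0$ and $t=1$ arguments of \cite{NPY} do not extend verbatim to general $t$.

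The paper's solution is to change the graph rather than the involution. The $t$-lattice graph of Definition \ref{def:tlattice} has \emph{two} vertices $v^{\circ}_{x,y}$ and $v^{\bullet}_{x,y}$ at every position and five edge types, with horizontal edges weighted $t\cdot y^{-a_x}$. A horizontal equality of a filling becomes two paths passing the same position, one through the white copy and one through the black copy, hence still vertex-disjoint; so the LGV lemma applies as a black box, and the factor $1-t$ is produced by a once-and-for-all local computation (Lemma \ref{lem:lem1}): the relevant path systems come in pairs with $\sign(\mathcal{P})\,w(\mathcal{P})=-t\,\sign(\mathcal{P}^\prime)\,w(\mathcal{P}^\prime)$, which sum to $(1-t)\sign(\mathcal{P}^\prime)w(\mathcal{P}^\prime)$, while the offending configurations cancel exactly. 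The matrix entries are identified with truncated interpolated MZVs by induction on $N$ (Lemma \ref{lem:lem0}), the vertex-disjoint path sum is identified with $\zeta^t_N(\kk)$ by a second induction on $N$ (Lemma \ref{lem:lem2}), and Theorem \ref{thm:thm1} follows from the truncated identity (Theorem \ref{thm:thm1harm}) by letting $N\to\infty$, where $a_i\geq 2$ guarantees convergence. To complete your route you would need either this doubled-vertex construction or an equivalent device that turns osculations into genuinely vertex-disjoint configurations carrying the correct $t$-weights; the steps before and after that point in your sketch are sound.
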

This Theorem is a general version of Theorem 1.1 in \cite{NPY}, where the cases $t=0$ and $t=1$ are proven. As an application we obtain algebraic relations between interpolated multiple zeta values:

\begin{theorem} \label{thm:thm2}Given an arbitrary sequence of integers $a_j \geq 2$, $i\in \Z$ and a partition $\lambda=(\lambda_1,\dots,\lambda_h)$ with conjugate  $\lambda^\prime=(\lambda^\prime_1,\dots,\lambda^\prime_{\lambda_1})$ the following identity holds
	\[ \det(\, \zeta^{1-t}(a_{1-j}, a_{2-j}, \dots , a_{(\lambda_i+j-i)-j}) \,)_{1\leq i,j \leq h}= \det(\, \zeta^t(a_{j-1}, a_{j-2}, \dots , a_{j-(\lambda^\prime_i+j-i)}) \,)_{1\leq i,j \leq \lambda_1}\,,\]
	where we use the same definition of  $\zeta^t(a_m,\dots,a_n)$ for $n<m$ as before. 
\end{theorem}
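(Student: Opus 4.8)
The plan is to read off both sides as the Jacobi--Trudi determinants produced by Theorem~\ref{thm:thm1}, applied to one tableau and to its conjugate, and then to identify them through the conjugation--duality $\zeta^t(\kk)=\zeta^{1-t}(\kk^\prime)$ announced in the introduction. Let $\kk=(\lambda,(k_{i,j}))$ be the tableau with $k_{i,j}=a_{j-i}$. By Theorem~\ref{thm:thm1} its value is
\[ \zeta^t(\kk)=\det\!\big(\zeta^t(a_{j-1},a_{j-2},\dots,a_{j-(\lambda^\prime_i+j-i)})\big)_{1\leq i,j\leq \lambda_1}, \]
which is exactly the right-hand side of Theorem~\ref{thm:thm2}. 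Hence it suffices to realise the left-hand side as $\zeta^{1-t}(\kk^\prime)$.

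For this I would apply Theorem~\ref{thm:thm1} a second time, to the conjugate tableau. The conjugate $\kk^\prime=(\lambda^\prime,(k^\prime_{i,j}))$ has entries $k^\prime_{i,j}=k_{j,i}=a_{i-j}$, so setting $b_m:=a_{-m}$ we get $k^\prime_{i,j}=b_{j-i}$; thus $\kk^\prime$ is again of the form required by Theorem~\ref{thm:thm1}, now for the partition $\lambda^\prime$ and the sequence $(b_m)_{m\in\Z}$. Applying the theorem at parameter $1-t$, and using $(\lambda^\prime)^\prime=\lambda$ and $\lambda^\prime_1=h$, yields
\[ \zeta^{1-t}(\kk^\prime)=\det\!\big(\zeta^{1-t}(b_{j-1},b_{j-2},\dots,b_{j-(\lambda_i+j-i)})\big)_{1\leq i,j\leq h}. \]
Substituting $b_{j-m}=a_{m-j}$ converts the arguments into $a_{1-j},a_{2-j},\dots,a_{(\lambda_i+j-i)-j}$, so this determinant is precisely the left-hand side of Theorem~\ref{thm:thm2}. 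The two boundary conventions (the value $1$ when the argument list is empty and $0$ when its length is negative) agree on both sides, since they are governed only by the integer $\lambda_i+j-i$.

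Combining the two expansions through the conjugation--duality then gives
\[ (\text{left-hand side})=\zeta^{1-t}(\kk^\prime)=\zeta^t(\kk)=(\text{right-hand side}), \]
which is the claim. The genuine content here is the functional equation $\zeta^t(\kk)=\zeta^{1-t}(\kk^\prime)$: once it is in hand, Theorem~\ref{thm:thm2} is a formal consequence of Theorem~\ref{thm:thm1}. Accordingly, I expect the only real difficulty to be combinatorial bookkeeping rather than anything analytic --- namely, keeping track of the reversal $b_m=a_{-m}$ of the defining sequence together with the transposition $\lambda\mapsto\lambda^\prime$ of the shape, so that the two Jacobi--Trudi expansions match term by term, boundary cases included. (Intuitively the duality itself holds because conjugating a tableau exchanges rows and columns, turning the weak inequalities that carry the weight $t$ into strict ones and vice versa, which is what swaps $t$ with $1-t$.)
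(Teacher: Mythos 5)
Your proposal is correct and takes essentially the same route as the paper: the paper's own proof likewise applies the Jacobi--Trudi theorem twice (to $\kk$ at parameter $t$ and to the conjugate shape at parameter $1-t$) and glues the two determinants together with the conjugation symmetry $\zeta^t_N(\kk)=\zeta^{1-t}_N(\kk^\prime)$ of Proposition \ref{prop:t1mt}. The only cosmetic difference is that the paper runs this argument for the truncated values $\zeta^t_N$ (Theorems \ref{thm:thm1harm} and \ref{thm:thm2harm}) and then lets $N\to\infty$, whereas you work directly with the limit objects, which is harmless here since $a_j\geq 2$ guarantees convergence.
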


\begin{ex}
	\begin{enumerate}[i)]
		\item Choosing $\lambda=(r)$ and setting $a_j = k_{j+1}$ we obtain for $k_1,\dots,k_r \geq 2$ the identity
		\begin{equation}{\small \zeta^{1-t}(k_1,\dots,k_r) = \det \begin{pmatrix}
		\zeta^t(k_1) & \zeta^t(k_2,k_1) & \zeta^t(k_3,k_2,k_1) &\dotsb & \zeta^t(k_r,\dots,k_1)\\
		1 			 & \zeta^t(k_2) 		& \zeta^t(k_3,k_2) & &\zeta^t(k_r,\dots,k_2) \\
		0 &	1& \zeta^t(k_3) &\zeta^t(k_4,k_3)& \vdots \\
		\vdots		&\ddots	& \ddots & \ddots&\zeta^t(k_r,k_{r-1})\\
		0		&\dotsb	&0 &1&\zeta^t(k_r)
		\end{pmatrix} \,.}
		\end{equation}
		\item Setting $\lambda=(\overbrace{r,\dots,r}^{r})$ and $a_j = k_{|j|+1}$ for $k_1,\dots,k_r\geq 2$  Theorem \ref{thm:thm1} implies that the Polynomial $ M_{k_1,\dots,k_r}(t) \in \mz[t]$ defined by
\[{\small M_{k_1,\dots,k_r}(t) = \det \begin{pmatrix}
\zeta^t(k_1,\dots,k_r) 			& \zeta^t(k_2,k_1,\dots,k_r) & \dotsb 			& \zeta^t(k_r,\dots,k_1,\dots,k_r)\\
\zeta^t(k_1,\dots,k_{r-1})		&  	\ddots	 				 & 	\ddots & \vdots 	\\
\vdots 							&			\ddots					 &  	\ddots	& \zeta^t(k_{r},\dots,k_1,k_2) \\
\zeta^t(k_1)	 				&	\zeta^t(k_2,k_1)		 &  \dotsb 			& \zeta^t(k_{r},\dots,k_1)
\end{pmatrix} \,.}\]		
		satisfies
			\begin{equation} M_{k_1,\dots,k_r}(t)= M_{k_1,\dots,k_r}(1-t)\,.	\end{equation}
	\end{enumerate}
\end{ex}

We will prove both Theorems for truncated version of interpolated (Schur) multiple zeta values. First we will recall the definition of Schur mutliple zeta values and then generalize this to their interpolated version. After this we will recall the Lemma of  Lindstr\"om, Gessel and Viennot which will be needed for the proof of Theorem \ref{thm:thm1} and its more general case for the truncated versions. In the last Section we will discuss a generalization of our results.\\

{\bf Acknowledgments}\\
The author would like to thank M. Nakasuji for introducing him to this topic and S. Kadota, N. Matthes,  Y. Suzuki and Y. Yamasaki for corrections on this note and fruitful discussions on Schur multiple zeta values. This project started while the author was an Overseas researcher under Postdoctoral Fellowship of Japan Society for the Promotion of Science. It was partially supported by JSPS KAKENHI Grant Number JP16F16021. Finally, the author would like to thank the Max-Planck Institute for Mathematics in Bonn for hospitality and support. 

\section{Interpolated Schur multiple zeta values}
In this section we will introduce interpolated Schur multiple zeta values. Since we will treat their truncated version in most of the cases and especially in the Proof of the main theorem, we recall that for a natural number $N\geq1$ the truncated version of \eqref{eq:mzv}, \eqref{eq:mzvstar} and \eqref{eq:mzvt} are given by
\begin{align*} 
\zeta_N(k_1,\ldots,k_r)&=\sum_{0<m_1<\cdots<m_r<N} \frac{1}{m_1^{k_1}\cdots m_r^{k_r}} \in \Q \,, \\ \zeta^\star_N(k_1,\ldots,k_r)&=\sum_{0<m_1\leq \cdots\leq m_r<N} \frac{1}{m_1^{k_1}\cdots m_r^{k_r}} \in \Q
\end{align*}
and \begin{equation*} \label{eq1_2}
\zeta^t_N(k_1,\ldots,k_r)=\sum_{0<m_1\leq\cdots \leq m_r<N} \frac{t^{e(m_1,\dots,m_r)}}{m_1^{k_1}\cdots m_r^{k_r}} \in \Q[t].
\end{equation*}
Before we define the truncated interpolated Schur multiple zeta values we will need to introduce some notation.
\begin{definition}
\begin{enumerate}[i)]
	\item By a partition of a natural numbers $n$ we denote a tuple $\lambda = (\lambda_1,\dots,\lambda_h)$, where $\lambda_1 \geq \dots \geq \lambda_h \geq 1$ and $n = |\lambda|:= \lambda_1 + \dots + \lambda_h$. Its conjugate is denoted by  $\lambda^\prime=(\lambda^\prime_1,\dots,\lambda^\prime_{\lambda_1})$ and it is defined by transposing the corresponding Young diagram. In the case $\lambda=(5,2,1)$ it is $\lambda^\prime=(3,2,1,1,1)$ which can be visualized by  
	\[{\footnotesize \ytableausetup{centertableaux, boxsize=0.5em}
		\lambda = \begin{ytableau}
		\,& \,& \,& \,&\,\\
		\,& \,\\
		\,
		\end{ytableau}}  \,\longrightarrow \,\lambda^\prime = {\footnotesize \begin{ytableau}
		\,& \,& \,\\
		\,& \,\\
		\,\\
		\,\\
		\,
		\end{ytableau}} \,. \]
	\item Using similar notation as in \cite{NPY} we define for a partition the set $\lambda$ \[D(\lambda) = \left\{(i,j) \in \Z^2 \mid 1 \leq i \leq h\,, 1 \leq j \leq \lambda_i \right\}\]which describe the coordinates of the corresponding Young diagram. 
	\item By a Young tableau of shape $\lambda$ we will denote a tupel $\kk = (\lambda, (k_{i,j}))$, where $k_{i,j} \in \Z$ are integers for indices $(i,j) \in D(\lambda)$. By $\YT(\lambda)$ we denote the set of all Young tableau of shape $\lambda$. 
\end{enumerate}
\end{definition}

We will use Young tableau for the replacement of the index set as well as a replacement for the summation domain. Instead summing over ordered pairs of natural numbers we will sum over "ordered" Young tableau.

\begin{definition}
	\begin{enumerate}[i)]
		\item  For a partition $\lambda = (\lambda_1,\dots,\lambda_h)$ and a natural numbers $N \geq 1$ we define the following set of ordered Young tableaux 
		\begin{align*}
		\OYT_N(\lambda) = \left\{ \mm  = (\lambda, (m_{i,j})) \in \YT(\lambda) \bigm\vert \begin{array}{c}
		0 < m_{i,j} < N\\
		m_{i,j} \leq m_{i+1,j} \text{ and } m_{i,j} \leq m_{i,j+1} \\
		m_{i,j} < m_{i+1,j+1}
		\end{array}   \right\}\,.
		\end{align*}
		The entries of these Young tableaux are ordered from the top left to the bottom right and there are no equalities on the diagonals. By 	$\OYT(\lambda) = \bigcup_{N\geq 1}	\OYT_N(\lambda)$ we denote the set of all ordered Young tableaux.  
		\item For an ordered Young tableau $ \mm  = (\lambda, (m_{i,j})) \in 	\OYT(\lambda) $ we define the number of vertical equalities by 
		\[v(\mm) =  \#\left\{ (i,j) \in D(\lambda) \mid m_{i,j} = m_{i+1,j} \right\}  \]
		and the number of horizontal equalities by 
		\[h(\mm) =  \#\left\{ (i,j) \in D(\lambda) \mid m_{i,j} = m_{i,j+1} \right\}  \,.\]
	\end{enumerate}
\end{definition}

For example in the case $N=4$ and $\lambda=(2,2)$ we have the following set
\[\OYT_4\left(\,{\footnotesize \ytableausetup{centertableaux, boxsize=1em}
	\begin{ytableau}
	\,& \, \\
	\,& \, 
	\end{ytableau}}\,\right) =\left\{ {\footnotesize 	\begin{ytableau}
	1& 1 \\
	1& 2
	\end{ytableau}} \,,\,{\footnotesize 	\begin{ytableau}
	1& 2 \\
	1& 2
	\end{ytableau}}\,,\,{\footnotesize 	\begin{ytableau}
	1& 1 \\
	2& 2
	\end{ytableau}}\,,\,{\footnotesize 	\begin{ytableau}
	1& 2 \\
	2& 2
	\end{ytableau}}\,,\,{\footnotesize 	\begin{ytableau}
	1& 1 \\
	1& 3
	\end{ytableau}} \,,\,{\footnotesize 	\begin{ytableau}
	1& 2 \\
	1& 3
	\end{ytableau}}\,,\dots,\,{\footnotesize 	\begin{ytableau}
	2& 3 \\
	3& 3
	\end{ytableau}}\right\} \,.\]
As an example for the number of vertical and horizontal equalities we give
\[ v\left({\footnotesize 	\begin{ytableau}
	2& 2 & 3 \\
	2& 3 \\
	2& 4 \\
	2
	\end{ytableau}}\right) = 3\quad \text{and} \quad  h\left({\footnotesize 	\begin{ytableau}
	2& 2 & 3 \\
	2& 3 \\
	2& 4 \\
	2
	\end{ytableau}}\right) = 1\,.  \]

\begin{definition} For a Young tableau $\kk = (\lambda, (k_{i,j})) \in \YT(\lambda)$ and a natural number $N\geq 1$ we define the truncated interpolated Schur multiple zeta value by
\begin{equation}\label{def:interpolsmzv}
 \zeta_N^t(\kk) = \sum_{\substack{\mm \in \OYT_N(\lambda)\\\mm = (\lambda, (m_{i,j}))}} t^{v(\mm)}  (1-t)^{h(\mm)} \prod_{(i,j) \in D(\lambda)} \frac{1}{m_{i,j}^{k_{i,j}}} \in \Q[t]\,. 
\end{equation}

\end{definition}
We will first explain why these polynomials can be seen as a generalization of the Schur multiple zeta values defined in \cite{NPY}.
For this we recall that the semi-standard Young tableaux $\SSYT(\lambda)$ are ordered Young tableaux without any vertical equalities. In other words they can be defined by $$\SSYT(\lambda) =  \left\{\mm \in 	\OYT(\lambda)  \mid  v(\mm) = 0 \right\}\,.$$ 
Writing  $\SSYT_N(\lambda) = \SSYT(\lambda) \cap 	\OYT_N(\lambda)$ we define the truncated Schur multiple zeta values by
\[ \zeta_N(\kk) = \sum_{\substack{\mm \in \SSYT_N(\lambda)\\\mm = (\lambda, (m_{i,j}))}} \prod_{(i,j) \in D(\lambda)} \frac{1}{m_{i,j}^{k_{i,j}}} \in \Q\,. \]
It is easy to see that we have $\zeta_N^0(\kk) = \zeta_N(\kk)$, since just the terms with $v(\mm) = 0$ contribute to the sum in \eqref{def:interpolsmzv} for the $t=0$ case. In \cite{NPY} the authors define Schur multiple zeta values by taking the limit $N \rightarrow \infty$ of $\zeta_N(\kk)$. The question for which Young tableaux $\kk$ this limit for $\zeta^t_N(\kk)$ exists will be discussed now. For this define the set of corners of a partition $\lambda$ by 
\[  \Cor(\lambda) =  \left\{ (i,j) \in D(\lambda) \mid  (i,j+1) \not\in D(\lambda) \text{ and } (i+1,j) \not\in D(\lambda)    \right\} \subset D(\lambda) \,. \]
For example in the case $\lambda=(3,2,2,1)$ the corners are $\Cor(\lambda) = \{(1,3), (3,2),(4,1)\}$, which we visualize as $\bullet$ in the corresponding Young diagram:
\[{\footnotesize 	\ytableausetup{centertableaux, boxsize=1em}\begin{ytableau}
	\,& \, & \bullet \\
	\,& \, \\
	\,&  \bullet \\
	 \bullet
	\end{ytableau}}\]
With this definition we can state the convergence region for interpolated Schur multiple zeta values.
\begin{lemma}
 In the case $k_{i,j} \geq 2$ for $(i,j) \in \Cor(\lambda)$ and $k_{i,j}\geq 1$ otherwise, the limit
 \[ \zeta^t(\kk) := \lim\limits_{N \rightarrow \infty}  \zeta_N^t(\kk)   \]
 exists and it is $\zeta^t(\kk) \in \mz[t]$.
 We call the polynomials $\zeta^t(\kk)$ interpolated Schur multiple zeta values.
\end{lemma}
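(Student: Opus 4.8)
The plan is to prove the two assertions---existence of the limit and membership in $\mz[t]$---separately, reducing both to the control of a single positive series. Since $v(\mm)+h(\mm)\le 2|\lambda|$ for every $\mm$, the polynomial $\zeta_N^t(\kk)$ has $t$-degree bounded independently of $N$, so it suffices to show that each coefficient of $t^d$ converges as $N\to\infty$ to an element of $\mz$. Writing $w(\mm)=\prod_{(i,j)\in D(\lambda)} m_{i,j}^{-k_{i,j}}>0$ and expanding $t^{v(\mm)}(1-t)^{h(\mm)}$, the coefficient of $t^d$ in $\zeta_N^t(\kk)$ has absolute value at most $2^{|\lambda|}S_N(\kk)$, where
\[ S_N(\kk)=\sum_{\mm\in\OYT_N(\lambda)} w(\mm)\ge 0 . \]
Thus everything will follow once I bound $S_N(\kk)$ uniformly in $N$ and identify the algebraic nature of the limit.

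\emph{Convergence.} The key point will be that each box can be dominated by a corner. I would fix a map $\alpha\colon D(\lambda)\to\Cor(\lambda)$ with $(i,j)\preceq\alpha(i,j)$ for all $(i,j)$ (from any box, step right or down repeatedly until reaching a box with no right or lower neighbour, i.e.\ a corner) and write $R_c=\alpha^{-1}(c)$, so that $D(\lambda)=\bigsqcup_c R_c$. For $\mm\in\OYT_N(\lambda)$ the entries are weakly increasing to the right and downward, hence $m_{i,j}\le m_c$ whenever $(i,j)\in R_c$. Dropping every constraint except these inequalities and decoupling the distinct corners gives the over-count
\[ S_N(\kk)\le \prod_{c\in\Cor(\lambda)}\ \sum_{M=1}^{N-1}\frac{1}{M^{k_c}}\prod_{(i,j)\in R_c\setminus\{c\}}\Big(\sum_{m=1}^{M}\frac{1}{m^{k_{i,j}}}\Big)\le \prod_{c\in\Cor(\lambda)}\ \sum_{M=1}^{\infty}\frac{(1+\log M)^{|R_c|-1}}{M^{k_c}}, \]
using $\sum_{m=1}^{M} m^{-k}\le 1+\log M$ for $k\ge 1$. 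Because $k_c\ge 2$ at every corner, each factor on the right converges, so $S_N(\kk)$ is bounded uniformly in $N$ and $S_\infty(\kk):=\sum_{\mm\in\OYT(\lambda)}w(\mm)<\infty$. Applying the coefficient estimate above to the increments $\OYT_N(\lambda)\setminus\OYT_{N-1}(\lambda)$ then shows that the series defining each coefficient of $t^d$ converges absolutely (its tail is dominated by $2^{|\lambda|}$ times the tail of $S_\infty(\kk)$), so $\lim_{N\to\infty}\zeta_N^t(\kk)$ exists in $\R[t]$.

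\emph{Membership in $\mz[t]$.} I would partition $\OYT_N(\lambda)=\bigsqcup_F\OYT_N^F(\lambda)$ according to the set $F$ of vertically or horizontally adjacent pairs that are \emph{equal} (the remaining adjacent pairs being strict); on each piece $v(\mm)$ and $h(\mm)$ are constant, so
\[ \zeta_N^t(\kk)=\sum_F t^{\#\{\text{vertical edges in }F\}}(1-t)^{\#\{\text{horizontal edges in }F\}}\sum_{\mm\in\OYT_N^F(\lambda)} w(\mm). \]
Collapsing the connected components of $F$ (each a ribbon, since the strict diagonal forbids $2\times2$ equal blocks) turns the inner sum into a truncated sum over strict labellings of a finite poset; resolving the remaining relations by the harmonic (stuffle) expansion rewrites it as a $\Q$-linear combination of products of truncated multiple zeta values $\zeta_N(\dots)$, each with last entry $\ge 2$ (the top of every maximal chain contains a corner box). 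Each such factor converges as $N\to\infty$ to an element of $\mz$, and since convergent multiple zeta values span $\mz$, the limit $\zeta^t(\kk)$ lies in $\mz[t]$; for $t=0$ only the $F$ without vertical edges survive, consistent with $\zeta^0(\kk)=\zeta(\kk)$.

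\emph{Main obstacle.} The delicate step is the corner domination. In the one-row and one-column cases the sum factors immediately, but for a general shape the rightmost box of a row or the lowest box of a column need not be a corner and may carry exponent $1$, so a naive row- or column-wise factorization diverges. The remedy is precisely to route every box to a genuine ($\preceq$-maximal) corner, whose exponent $\ge 2$ supplies enough decay to absorb the at most logarithmic growth produced by the exponent-$1$ boxes lying weakly north-west of it.
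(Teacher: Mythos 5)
Your proof is correct, but it takes a genuinely different (and much more self-contained) route than the paper. The paper disposes of this lemma in two sentences: the convergence is quoted from the analogous statement for non-interpolated Schur multiple zeta values (Lemma 2.1 of \cite{NPY}), and the membership $\zeta^t(\kk)\in\mz[t]$ is dismissed as ``easy combinatorial arguments which we will omit here.'' Your corner-domination estimate is in effect a proof of the cited convergence statement: routing every box to a weakly south-east corner, whose exponent $\geq 2$ absorbs the logarithmic factors produced by the exponent-$1$ boxes, is exactly the mechanism that makes the corner condition the right convergence criterion. Your second step --- stratifying $\OYT_N(\lambda)$ by the equality pattern $F$, collapsing the equal components, and expanding the resulting constrained sum by total preorders (the stuffle/harmonic expansion) into truncated multiple zeta values whose last entry is the exponent sum of the maximal class, which contains a corner and is hence $\geq 2$ --- is precisely the kind of combinatorial argument the paper omits, so your write-up supplies content the paper does not. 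Two small remarks: first, the components of an equality pattern are even more constrained than ribbons, since the diagonal condition $m_{i,j}<m_{i+1,j+1}$ forbids any two cells of a component in relative position $(+1,+1)$, not merely $2\times 2$ blocks (this only strengthens your claim and does not affect the argument); second, your membership step alone already yields convergence, since each stratum is a nonnegative integer combination of truncated multiple zeta values with last entry $\geq 2$, so the corner-domination estimate, while correct and illuminating, could be absorbed into it.
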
 
\begin{proof} This follows from the analogue statement for Schur multiple zeta values (Lemma 2.1) in \cite{NPY}. 
 The fact that these are elements in $\mz[t]$ can be proven by easy combinatorial arguments which we will omit here.  
\end{proof}

\begin{ex}
In the case $\lambda=(2,1)$ one can check that for $a\geq 1, b,c \geq 2$
\begin{align*}	\zeta^t \left(\begin{ytableau}
	a & b  \\
	c
	\end{ytableau}\right) &= \zeta(a,b,c) +\zeta(a,c,b) + \zeta(a+b,c)+\zeta(a,b+c) \\
	&+ \big( \zeta(a+c,b) - \zeta(a+b,c) + \zeta(a+b+c) \big) \cdot t - \zeta(a+b+c) \cdot  t^2  \,.
\end{align*}
\end{ex}
In the special case $\lambda = (1,\dots,1)$ we obtain the following sum
\[  \zeta_N^t\left({\footnotesize \ytableausetup{centertableaux, boxsize=1.5em}
	\begin{ytableau}
	k_1  \\
		\scriptstyle  \vdots \\
	k_r
	\end{ytableau}} \right) = \sum_{0 < m_1 \leq\dots \leq m_r < N} \frac{t^{e(m_1,\dots,m_r)}}{m_1^{k_1}\cdots m_r^{k_r}} = \zeta^t_N(k_1,\dots,k_r)\,,   \]
	where we recall that $e(m_1,\dots,m_r)$ are the number of equalities between the $m_j$, which in this case correspond to the number of vertical equalities $v(\mm)$.
Therefore in the case $k_1,\dots,k_{r-1}\geq 1$ and $k_r\geq 2$ we can take the limit $N \rightarrow \infty$ and obtain the interpolated multiple zeta value $\zeta^t(k_1,\dots,k_r)$.

Even though we are more interested in the interpolated Schur multiple zeta values, we will work with the truncated versions in the following. 
It should also be remarked that the weights $k_{i,j}$ can be arbitrary (in particular negative) integers in the truncated case. 
The first property of truncated interpolated Schur multiple zeta values is the behavior under the transformation $t\rightarrow 1-t$, which will be the key fact for the proof of Theorem \ref{thm:thm2} in the introduction. 

\begin{proposition}\label{prop:t1mt}
The transformation $t\rightarrow 1-t$ corresponds to conjugation. For every Young tableau $\kk \in \YT(\lambda)$ and every $N\geq 1$ we have
\[\zeta_N^t(\kk) = \zeta_N^{1-t}(\ck) \,.\]
\end{proposition}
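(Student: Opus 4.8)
The plan is to exhibit a bijection between $\OYT_N(\lambda)$ and $\OYT_N(\lambda')$ that realizes conjugation of Young tableaux, and to track how the weights $t^{v(\mm)}(1-t)^{h(\mm)}$ transform under it. Given an ordered Young tableau $\mm = (\lambda, (m_{i,j})) \in \OYT_N(\lambda)$, I would define its conjugate $\mm' = (\lambda', (m'_{j,i}))$ by simply setting $m'_{j,i} = m_{i,j}$, i.e. transposing the array exactly as one transposes the index tableau $\kk$ into $\ck$. First I would verify that this transposition is a well-defined involution $\OYT_N(\lambda) \to \OYT_N(\lambda')$. This is where the specific defining inequalities of $\OYT_N$ become crucial: the two weak conditions $m_{i,j} \leq m_{i+1,j}$ (vertical) and $m_{i,j} \leq m_{i,j+1}$ (horizontal) are interchanged by transposition, while the strict diagonal condition $m_{i,j} < m_{i+1,j+1}$ is visibly symmetric under $(i,j) \mapsto (j,i)$. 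Hence $\mm \in \OYT_N(\lambda)$ if and only if $\mm' \in \OYT_N(\lambda')$, and the map is its own inverse.

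The second and decisive observation is the effect on the two equality statistics. Since transposition swaps the roles of rows and columns, a vertical equality $m_{i,j} = m_{i+1,j}$ in $\mm$ becomes exactly a horizontal equality $m'_{j,i} = m'_{j,i+1}$ in $\mm'$, and conversely. Therefore $v(\mm') = h(\mm)$ and $h(\mm') = v(\mm)$. Consequently the weight attached to $\mm'$ in the defining sum \eqref{def:interpolsmzv} for $\zeta_N^{1-t}(\ck)$ is
\[
(1-t)^{v(\mm')}\,\bigl(1-(1-t)\bigr)^{h(\mm')} = (1-t)^{h(\mm)}\, t^{v(\mm)},
\]
which is precisely the weight $t^{v(\mm)}(1-t)^{h(\mm)}$ attached to $\mm$ in the sum for $\zeta_N^t(\kk)$.

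Finally I would match the monomial factors: since $k'_{j,i} = k_{i,j}$ and $m'_{j,i} = m_{i,j}$, the product $\prod_{(j,i)\in D(\lambda')} (m'_{j,i})^{-k'_{j,i}}$ over $\mm'$ equals $\prod_{(i,j)\in D(\lambda)} m_{i,j}^{-k_{i,j}}$ over $\mm$ term by term. Summing the identity of weighted monomials over the bijection then gives $\zeta_N^t(\kk) = \zeta_N^{1-t}(\ck)$ directly. I do not expect any serious obstacle here; the only point requiring genuine care is the first step, namely checking that the defining conditions of $\OYT_N$ are preserved under transposition, and in particular that the asymmetry between the weak and strict inequalities does not break down. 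Since the weak vertical and weak horizontal conditions are swapped and the strict diagonal condition is symmetric, the verification goes through cleanly.
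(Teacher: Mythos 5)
Your proof is correct and follows essentially the same route as the paper: the paper's own proof is just the one-line observation that conjugation swaps the vertical and horizontal equality counts, so substituting $t \rightarrow 1-t$ leaves the sum unchanged. You have simply written out in full the details the paper leaves implicit (the transposition bijection on $\OYT_N$, the symmetry of the defining inequalities, and the matching of weights and monomials), all of which check out.
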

\begin{proof}
	This is clear by definition of interpolated Schur multiple zeta values. The number of vertical and horizontal equalities switch when replacing $\kk$ by $\kk^\prime$. Therefore by substituting $t$ with $1-t$ the total sum does not change. 
\end{proof}
\section{The Lemma of Lindstr\"om, Gessel and Viennot}
In this section we will recall some notations from \cite{AZ}  to state the Lemma of Lindstr\"om, Gessel and Viennot. This will be the key ingredient for the Proof of Theorem \ref{thm:thm1} and we will give an example for the special case $t=1$. 

 Suppose we are given an arbitrary commutative ring $\mathcal{R}$  (which will be $\Q$ and $\Q[t]$ in our cases) and a finite acyclic directed Graph $G=(V,E,w)$. Here by $V$ we denote the set of vertices, by $E$ we denote the set of directed edges and by the map $w: E \rightarrow \mathcal{R}$ we denote its weight function. 
 For a directed path $P$ from a vertex $A \in V$ to a vertex $B \in V$, written shortly $P: A \rightarrow B$, we define its weight by
\[ w(P) := \prod_{e \in P} w(e)\,,\]
where the product runs over all edges on the path $P$.   We further define the sum of all path weights from an $A$ to $B$ by
\[ w(A,B) := \sum_{P: A \rightarrow B} w(P)  \,. \]
For two sets of vertices $\mathcal{A}=\{A_1,\dots,A_n\} \subset V$ and $\mathcal{B}=\{B_1,\dots,B_n\} \subset V$  we define a vertex-disjoint path system $\mathcal{P}$ from $\mathcal{A}$ to $\mathcal{B}$, denoted by $\mathcal{P} :\mathcal{A} \rightarrow \mathcal{B}$, as a permutation $\sigma \in \Sigma_n$ together with $n$ pairwise vertex-disjoint paths $P_i : A_i \rightarrow B_{\sigma{i}}$. We write $\sign \mathcal{P} = \sign \sigma$ and define
\[ w(\mathcal{P}) := \prod_{j=1}^n w(P_i) \,.\]
\begin{lemma}(Lindstr\"om, Gessel, Viennot) \label{lem:LGV}Let $G=(V,E,w)$ be a finite weighted acyclic directed graph, $A=\{A_1,\dots,A_n\}$ and $B=\{B_1,\dots,B_n\}$ two $n$-sets of vertices. Then 
	\[  \sum_{\mathcal{P} \,:\,\mathcal{A} \rightarrow \mathcal{B}} \sign\mathcal{P} \cdot w(\mathcal{P})= \det( w(A_i,B_j) )_{1\leq i,j \leq n}  \]
\end{lemma}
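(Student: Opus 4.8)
The plan is to prove the identity by expanding the determinant via the Leibniz formula and then cancelling the contributions of those path systems that fail to be vertex-disjoint through a sign-reversing involution. First I would write
\[ \det( w(A_i,B_j) )_{1\leq i,j \leq n} = \sum_{\sigma \in \Sigma_n} \sign\sigma \prod_{i=1}^n w(A_i,B_{\sigma(i)})\,. \]
Since each factor $w(A_i,B_{\sigma(i)})$ is by definition the sum over all paths $P_i: A_i \rightarrow B_{\sigma(i)}$, distributing the product over these sums rewrites the determinant as a signed sum over \emph{all} path systems $\mathcal{P}=(\sigma,(P_1,\dots,P_n))$ — vertex-disjoint or not — where $\sign\mathcal{P}=\sign\sigma$ and $w(\mathcal{P})=\prod_i w(P_i)$. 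The vertex-disjoint systems reproduce exactly the left-hand side, so it remains to show that the self-intersecting systems contribute zero.

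Then I would construct the standard tail-swapping involution on the set of path systems in which at least two paths share a vertex. Fixing a topological order on $V$ (which exists because $G$ is acyclic), I would select a canonical intersection: let $i$ be the smallest index for which $P_i$ meets another path, let $v$ be the earliest vertex along $P_i$ lying on some $P_j$ with $j \neq i$, and let $j$ be the smallest such index. Swapping the portions of $P_i$ and $P_j$ after $v$ produces new paths $P_i': A_i \rightarrow B_{\sigma(j)}$ and $P_j': A_j \rightarrow B_{\sigma(i)}$, so the underlying permutation becomes $\sigma \circ (i\,j)$. This flips the sign, while the multiset of edges used by the whole system is unchanged, so $w(\mathcal{P})$ is preserved. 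Hence each self-intersecting system cancels against its image, the signed sum over non-disjoint systems vanishes, and the Lemma follows.

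The main obstacle I expect is verifying that this map is genuinely an involution: one must check that the canonical choice of the vertex $v$ and of the indices $i,j$ is recovered after the swap, so that applying the construction a second time returns the original system. This is precisely where acyclicity is essential — it ensures the topological order is well-defined, that the swapped initial and terminal segments join into valid directed paths, and that the earliest intersection detected along $P_i'$ is again $v$ with the same pair of indices. Once the selection rule is pinned down carefully enough to commute with the swap, the cancellation is immediate and the proof concludes.
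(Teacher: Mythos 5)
The paper gives no argument of its own for this lemma---it simply cites Aigner--Ziegler, Chapter 25---and your proof is precisely the classical argument presented there: expand the determinant by the Leibniz formula, identify the terms with signed weights of arbitrary (not necessarily disjoint) path systems, and cancel the non-disjoint ones by the weight-preserving, sign-reversing tail-swap involution. Your selection rule (minimal offending index $i$, first intersection vertex $v$ along $P_i$, minimal index $j\neq i$ with $v$ on $P_j$) is the standard one and is indeed recovered after the swap, since the swap leaves the vertex set of $P_i\cup P_j$ and all other paths unchanged, so the proposal is correct and matches the proof the paper points to.
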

\begin{proof}
This is a classical result which can be found in this form in \cite{AZ} Chapter 25.
\end{proof}

Our goal will be to construct the correct directed Graph and weights, such that the right-(resp. left-)hand side of Lemma \ref{lem:LGV} will give us the right-(resp. left-)hand side of Theorem \ref{thm:thm1}. We first illustrate with an example how to obtain multiple zeta-star values by summing over paths of certain Graphs before we consider the interpolated versions afterwards. 

\begin{ex}\label{ex:LGVExample}
 i) Consider the Lattice Graph $G=(V,E,w)$ with vertices $$V=\{(x,y) \mid 1 \leq x,y \leq 6\}\,.$$ From each vertex we construct a directed edge to the vertex on the right and to the vertex below, i.e. paths can just run from the top left to the bottom right. We set $A_1 = (1,6)$ to be the vertex on the top left and $B_1 = (6,1)$ to be the vertex on the bottom right.
 For a horizontal edge $e$ from $(x,y)$ to $(x+1,y)$ we define the weight by $w(e) = y^{-a_{x-5}}$, with arbitrary integers $(a_j)_{j\in\Z}$. For a vertical edge $e$ we set $w(e)=1$.
  One possible path from $A_1$ to $B_1$ together with its weight is shown in Figure \ref{fig:exampleL6}. 
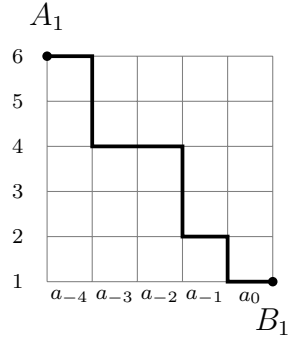
\begin{figure}[H]
	\centering
	\begin{tikzpicture}[scale=0.3]
\draw[step=2, thin, gray] (0,0) grid (10,10); 

\draw [line width=0.5mm] (0,10) -- (2,10) -- (2,6) -- (4,6) -- (6,6) -- (6,2) -- (8,2)--(8,0) -- (10,0) ; 

\fill[black] (10,0) circle (6pt);
\fill[black] (0,10) circle (6pt);
\coordinate [label=above:$A_1$] (R) at (0,10.7);
\coordinate [label=below:$B_1$] (R) at (10,-0.7);

\node (c) at (-1.3,0) {\scriptsize $1$};
\node (c) at (-1.3,2) {\scriptsize $2$};
\node (c) at (-1.3,4) {\scriptsize $3$};
\node (c) at (-1.3,6) {\scriptsize $4$};
\node (c) at (-1.3,8) {\scriptsize $5$};
\node (c) at (-1.3,10) {\scriptsize $6$};


\node (c) at (1,-0.7) {\scriptsize $a_{-4}$};
\node (c) at (3,-0.7) {\scriptsize $a_{-3}$};
\node (c) at (5,-0.7) {\scriptsize $a_{-2}$};
\node (c) at (7,-0.7) {\scriptsize $a_{-1}$};
\node (c) at (9,-0.7) {\scriptsize $a_0$};
\end{tikzpicture}\vspace*{-7mm}
	\caption{A path $P$ from $A_1$ to $B_1$ with weight $w(P)=1^{-a_{0}} 2^{-a_{-1}} 4^{-a_{-2}} 4^{-a_{-3}} 6^{-a_{-4}}.$}\label{fig:exampleL6}
\end{figure}
\vspace*{3mm}
Summing over all possible path from $A_1$ to $B_1$ we see that we get the truncated multiple zeta-star value (for $N=7$ in this case):
  \[ w(A_1,B_1) := \sum_{P: A_1 \rightarrow B_1} w(P) = \sum_{0 < m_1 \leq \dots \leq m_5 < 7} \frac{1}{m_1^{a_0} \dots m_5^{a_{-4}}} = \zeta^\star_7(a_0,\dots,a_{-4})  \,. \]  

ii) Now consider the similar Graph in Figure \ref{fig:exampleL62}, where the weights are defined in the same way as before but where we now consider four Points from the Set $\mathcal{A}=\{A_1,A_2\}$ and $\mathcal{B}=\{B_1,B_2\}$. 

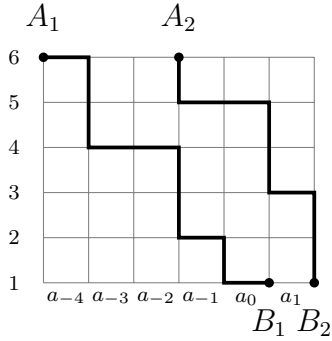
\begin{figure}[H]
	\centering
	\begin{tikzpicture}[scale=0.3]
\draw[step=2, thin, gray] (0,0) grid (12,10); 

\draw [line width=0.5mm] (0,10) -- (2,10) -- (2,6) -- (4,6) -- (6,6) -- (6,2) -- (8,2)--(8,0) -- (10,0) ; 

\draw [line width=0.5mm] (6,10) -- (6,8) -- (8,8) -- (10,8) -- (10,6) -- (10,4) -- (12,4) -- (12,2)-- (12,0) ;

\fill[black] (10,0) circle (6pt);
\fill[black] (0,10) circle (6pt);
\fill[black] (12,0) circle (6pt);
\fill[black] (6,10) circle (6pt);
\coordinate [label=above:$A_1$] (R) at (0,10.7);
\coordinate [label=below:$B_1$] (R) at (10,-0.7);
\coordinate [label=above:$A_2$] (R) at (6,10.7);
\coordinate [label=below:$B_2$] (R) at (12,-0.7);

\node (c) at (-1.3,0) {\scriptsize $1$};
\node (c) at (-1.3,2) {\scriptsize $2$};
\node (c) at (-1.3,4) {\scriptsize $3$};
\node (c) at (-1.3,6) {\scriptsize $4$};
\node (c) at (-1.3,8) {\scriptsize $5$};
\node (c) at (-1.3,10) {\scriptsize $6$};

\node (c) at (1,-0.7) {\scriptsize $a_{-4}$};
\node (c) at (3,-0.7) {\scriptsize $a_{-3}$};
\node (c) at (5,-0.7) {\scriptsize $a_{-2}$};
\node (c) at (7,-0.7) {\scriptsize $a_{-1}$};
\node (c) at (9,-0.7) {\scriptsize $a_0$};
\node (c) at (11,-0.7) {\scriptsize $a_1$};
\end{tikzpicture}\vspace*{-7mm}
	\caption{Two paths with weights $1^{-a_{0}} 2^{-a_{-1}} 4^{-a_{-2}} 4^{-a_{-3}} 6^{-a_{-4}}$ and $5^{-a_{-1}} 5^{-a_{0}} 3^{-a_{1}}$}\label{fig:exampleL62}
\end{figure}
\vspace*{3mm}
Clearly we have with the same arguments as before
\[\det( w(A_i,B_j) )_{1\leq i,j \leq 2} = \begin{pmatrix}
w(A_1,B_1) & w(A_1,B_2)\\
w(A_2,B_1) & w(A_2,B_2)
\end{pmatrix} =  \begin{pmatrix}
\zeta^\star_7(a_0,\dots,a_{-4})  & \zeta^\star_7(a_1,a_0,\dots,a_{-4}) \\
\zeta^\star_7(a_{-1},a_{0})  & \zeta^\star_7(a_{-1},a_{0},a_1) 
\end{pmatrix} \,.\]
Due to Lemma \ref{lem:LGV} this equals $\sum_{\mathcal{P} \,:\,\mathcal{A} \rightarrow \mathcal{B}} \sign\mathcal{P} \cdot w(\mathcal{P})$. Since we just consider pairwise vertex-disjoints paths $P_i : A_i \rightarrow B_{\sigma{i}}$ the only choice are the  vertex-disjoint path systems $\mathcal{P}$ with $\sigma = id$ (i.e. $\sign\mathcal{P}=1$) because every path from $A_1$ to $B_2$ would cross a path from $A_2$ to $B_1$. We claim that we then obtain 
\[\sum_{\mathcal{P} \,:\,\mathcal{A} \rightarrow \mathcal{B}} w(\mathcal{P})  = \zeta_7^1\left(\kk \right) \quad \text{ with } \quad \kk = {\footnotesize 	\ytableausetup{centertableaux, boxsize=1.8em}\begin{ytableau}
	a_0& a_1 \\
	a_{-1}& a_0 \\
	a_{-2}& a_{-1} \\
	a_{-3}\\
	a_{-4}
	\end{ytableau}} \,\,.\]
In other words the sum over all  vertex-disjoint path systems in the graph of Figure \ref{fig:exampleL62} gives the truncated interpolated Schur multiple zeta values $\zeta^t_N(\kk)$ for $t=1$ and $N=7$, which we will explain now. 
The part of the sum corresponding to the first column of the above Young tableaux $\kk$ corresponds to the paths $P_1: A_1 \rightarrow B_1$ and the second column to the paths $P_2: A_2 \rightarrow B_2$. 
Setting $t=1$ in the definition of interpolated Schur multiple zeta values means, that we just sum over Young tableaux $\mm$ with no horizontal equalities.  It is easy to check that this exactly corresponds to the fact, that the two paths $P_1$ and $P_2$ in a vertex-disjoint path system $\mathcal{P}$ have disjoint vertices. The  vertex-disjoint path system in Figure \ref{fig:exampleL62} corresponds to the Young tableaux 
\[\mm = {\footnotesize 	\ytableausetup{centertableaux, boxsize=1.3em}\begin{ytableau}
	1& 3 \\
	2& 5 \\
	4& 5 \\
	4\\
	6
	\end{ytableau}} \in \OYT_7\left({\footnotesize 	\ytableausetup{centertableaux, boxsize=0.4em}\begin{ytableau}
	\,& 	\, \\
		\,& 	\, \\
		\,& 	\, \\
		\,\\
		\,
	\end{ytableau}}\right)
\]
which is a summand in the definition of  $\zeta^1_7(\kk)$. Since $\zeta^1 = \zeta^\star$ we get in total 
\[  \zeta_7^1\left(\kk \right) =   \begin{pmatrix}
\zeta^1_7(a_0,a_{-1},a_{-2},a_{-3},a_{-4})  & \zeta^1_7(a_1,a_0,a_{-1},a_{-2},a_{-3},a_{-4}) \\
\zeta^1_7(a_{0},a_{1})  & \zeta^1_7(a_{1},a_{0},a_{-1}) 
\end{pmatrix}  \]
which is the truncated version of Theorem \ref{thm:thm1} in the $t=1$ case. 
\end{ex}

To prove the statement of Theorem \ref{thm:thm1} for general $t$ we will consider a more complicated Graph in the next section.

\section{The $t$-Lattice Graph}
To include the parameter $t$ into the Graph we will construct a Lattice Graph with two vertices on each position $(x,y)$ together with five different types of edges.

\begin{definition}\label{def:tlattice}
We define for a fixed $N \geq 1$ the weighted directed Graph $G^t_N = (V_N,E_N, w)$, which we call \emph{$t$-Lattice Graph}, by the following data:
\begin{enumerate}[i)]
	\item The vertices are given by $V_N = \left\{ v^\ast_{x,y}  \mid x \in \Z, \, 0 \leq y < N ,\, \ast \in \{\bullet, \circ\} \right\}$.
	\begin{figure}[H]
		\centering
		\begin{tikzpicture}[scale=0.8]
\newcommand{\dpt}[2]{	
	\fill[white] (#1,#2+0.1) circle (2pt);
	\draw (#1,#2+0.1) circle (2pt);
	\fill[black] (#1,#2-0.1) circle (2pt);
}
\draw[step=1, thin, gray,dotted] (-4,0) grid (4,2); 
\draw[step=1, thin, gray,dotted] (-4,3) grid (4,5);

\node (c) at (-5,-0.7) {\scriptsize $\dots$};
\node (c) at (-4,-0.7) {\scriptsize $-4$};
\node (c) at (-3,-0.7) {\scriptsize $-3$};
\node (c) at (-2,-0.7) {\scriptsize $-2$};
\node (c) at (-1,-0.7) {\scriptsize $-1$};
\node (c) at (0,-0.7) {\scriptsize $0$};
\node (c) at (1,-0.7) {\scriptsize $1$};
\node (c) at (2,-0.7) {\scriptsize $2$};
\node (c) at (3,-0.7) {\scriptsize $3$};
\node (c) at (4,-0.7) {\scriptsize $4$};
\node (c) at (5,-0.7) {\scriptsize $\dots$};
\node (c) at (5,1) {\scriptsize $\dots$};
\node (c) at (5,2.5) {\scriptsize $\dots$};
\node (c) at (5,4) {\scriptsize $\dots$};
\node (c) at (-5,1) {\scriptsize $\dots$};
\node (c) at (-5,2.5) {\scriptsize $\dots$};
\node (c) at (-5,4) {\scriptsize $\dots$};
\node (c) at (-6,1) {\scriptsize $1$};
\node (c) at (-6,0) {\scriptsize $0$};
\node (c) at (-6,2) {\scriptsize $2$};
\node (c) at (-6,2.6) {\scriptsize $\vdots$};
\node (c) at (-4,2.6) {\scriptsize $\vdots$};
\node (c) at (-3,2.6) {\scriptsize $\vdots$};
\node (c) at (-2,2.6) {\scriptsize $\vdots$};
\node (c) at (-1,2.6) {\scriptsize $\vdots$};
\node (c) at (-0,2.6) {\scriptsize $\vdots$};
\node (c) at (1,2.6) {\scriptsize $\vdots$};
\node (c) at (2,2.6) {\scriptsize $\vdots$};
\node (c) at (3,2.6) {\scriptsize $\vdots$};
\node (c) at (4,2.6) {\scriptsize $\vdots$};
\node (c) at (-6,3) {\scriptsize $N-3$};
\node (c) at (-6,4) {\scriptsize $N-2$};
\node (c) at (-6,5) {\scriptsize $N-1$};

\foreach \x in {-4,...,4} {
\dpt{\x}{0}	
\dpt{\x}{1}	
\dpt{\x}{2}
\dpt{\x}{3}	
\dpt{\x}{4}
\dpt{\x}{5}			
	}

\end{tikzpicture}\vspace*{-7mm}
		\caption{Vertices $V_N$ of the $t$-Lattice Graph.}
	\end{figure}
	\vspace*{3mm}
	\item For each position $(x,y)$ (with $y>0$) we will have five different types of outgoing edges 
	\[E_N := \left\{ e^i_{x,y}  \mid i=1,\dots,5\,, x \in \Z, \, 0 < y < N \right\}\,.\]
	These edges together with their weights are given by the following list:
	\begin{enumerate} [(1)]
		\item  $e^1_{x,y}$ is a vertical edge from $x^\circ_{x,y}$ to $x^\circ_{x,y-1}$ with weight $w(e^1_{x,y}) = 1$. 
		\item  $e^2_{x,y}$ is a diagonal edge from $x^\circ_{x,y}$ to $x^\circ_{x+1,y-1}$ with weight $w(e^2_{x,y}) = y^{-a_x}$. 
		\item $e^3_{x,y}$ is a horizontal edge from $x^\circ_{x,y}$ to $x^\bullet_{x+1,y}$ with weight $w(e^3_{x,y}) = t\cdot y^{-a_x}$. 
		\item $e^4_{x,y}$ is a diagonal edge from $x^\bullet_{x,y}$ to $x^\circ_{x+1,y-1}$ with weight $w(e^4_{x,y}) = y^{-a_x}$. 
		\item $e^5_{x,y}$ is a horizontal edge from $x^\bullet_{x,y}$ to $x^\bullet_{x+1,y}$ with weight $w(e^5_{x,y}) = t\cdot  y^{-a_x}$. 
	\end{enumerate}
	
	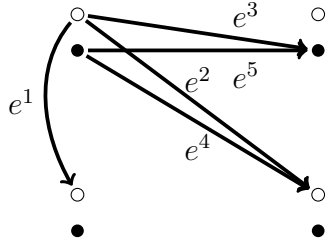
\begin{figure}[H]
		\centering
		\begin{tikzpicture}[scale=0.8]
\def\rad{3pt}

\coordinate (A1) at (1,4.3);
\coordinate (A2) at (1,3.7);

\coordinate (B1) at (1,1.3);
\coordinate (B2) at (1,0.7);

\coordinate (C1) at (5,4.3);
\coordinate (C2) at (5,3.7);

\coordinate (D1) at (5,1.3);
\coordinate (D2) at (5,0.7);

\draw [line width=0.5mm,->,shorten <= \rad+1pt, shorten >= \rad+1pt]  (A1) to [out=230,in=120] node[pos=0.5,left] {$e^1$} (B1);

\draw [line width=0.5mm,->,shorten <= \rad+1pt, shorten >= \rad+1pt]  (A1) to node[pos=0.5,above] {$e^2$} (D1);

\draw [line width=0.5mm,->,shorten <= \rad+1pt, shorten >= \rad+1pt]  (A1) to node[pos=0.7,above] {$e^3$} (C2);

\draw [line width=0.5mm,->,shorten <= \rad+1pt, shorten >= \rad+1pt]  (A2) to node[pos=0.5,below] {$e^4$} (D1);

\draw [line width=0.5mm,->,shorten <= \rad+1pt, shorten >= \rad+1pt]  (A2) to node[pos=0.7,below] {$e^5$} (C2);

\draw (A1) circle (\rad);
\fill[black] (A2) circle (\rad);

\draw (B1) circle (\rad);
\fill[black] (B2) circle (\rad);

\draw (C1) circle (\rad);
\fill[black] (C2) circle (\rad);

\draw (D1) circle (\rad);
\fill[black] (D2) circle (\rad);

\end{tikzpicture}\vspace*{-7mm}
		\caption{Overview of the five  different types of edges.}\label{fig:tlattice1}
	\end{figure}
	\vspace*{3mm}
\end{enumerate}
\end{definition}

\begin{ex}

As an example consider the following path $P$ in $G_5$ from the vertex $v^\circ_{-4,4}$ to $v^\circ_{4,0}$.
	\begin{figure}[H]
		\centering
		\begin{tikzpicture}[scale=0.9]
\newcommand{\dpt}[2]{	
	\fill[white] (#1,#2+0.1) circle (2pt);
	\draw (#1,#2+0.1) circle (2pt);
	\fill[black] (#1,#2-0.1) circle (2pt);
}
\draw[step=1, thin, gray,dotted] (-4,0) grid (4,4); 

\draw [line width=0.3mm,->,shorten <= 2pt, shorten >= 2pt]  (-4,4.1) to (-3,3.9);
\draw [line width=0.3mm,->,shorten <= 2pt, shorten >= 2pt]  (-3,3.9) to (-2,3.9);
\draw [line width=0.3mm,->,shorten <= 2pt, shorten >= 2pt]  (-2,3.9) to (-1,3.1);
\draw [line width=0.3mm,->,shorten <= 2pt, shorten >= 2pt]  (-1,3.1) to [out=230,in=120] (-1,2.1);
\draw [line width=0.3mm,->,shorten <= 2pt, shorten >= 2pt]   (-1,2.1) to  (0,1.1);
\draw [line width=0.3mm,->,shorten <= 2pt, shorten >= 2pt]   (0,1.1) to   (1,0.9);
\draw [line width=0.3mm,->,shorten <= 2pt, shorten >= 2pt]   (1,0.9) to   (2,0.9);
\draw [line width=0.3mm,->,shorten <= 2pt, shorten >= 2pt]   (2,0.9) to   (3,0.9);
\draw [line width=0.3mm,->,shorten <= 2pt, shorten >= 2pt]   (3,0.9) to   (4,0.1);
\node (c) at (-5,-0.7) {\scriptsize $\dots$};
\node (c) at (-4,-0.7) {\scriptsize $-4$};
\node (c) at (-3,-0.7) {\scriptsize $-3$};
\node (c) at (-2,-0.7) {\scriptsize $-2$};
\node (c) at (-1,-0.7) {\scriptsize $-1$};
\node (c) at (0,-0.7) {\scriptsize $0$};
\node (c) at (1,-0.7) {\scriptsize $1$};
\node (c) at (2,-0.7) {\scriptsize $2$};
\node (c) at (3,-0.7) {\scriptsize $3$};
\node (c) at (4,-0.7) {\scriptsize $4$};
\node (c) at (5,-0.7) {\scriptsize $\dots$};
\node (c) at (5,1) {\scriptsize $\dots$};
\node (c) at (5,2.5) {\scriptsize $\dots$};
\node (c) at (5,4) {\scriptsize $\dots$};
\node (c) at (-5,1) {\scriptsize $\dots$};
\node (c) at (-5,2.5) {\scriptsize $\dots$};
\node (c) at (-5,4) {\scriptsize $\dots$};
\node (c) at (-6,1) {\scriptsize $1$};
\node (c) at (-6,0) {\scriptsize $0$};
\node (c) at (-6,2) {\scriptsize $2$};

\node (c) at (-6,3) {\scriptsize $3$};
\node (c) at (-6,4) {\scriptsize $4$};

\foreach \x in {-4,...,4} {
\dpt{\x}{0}	
\dpt{\x}{1}	
\dpt{\x}{2}
\dpt{\x}{3}	
\dpt{\x}{4}		
	}

\end{tikzpicture}\vspace*{-7mm}
	\end{figure}
The edges of this paths are given by $P=e^3_{-4,4}\,e^5_{-3,4}\,e^4_{-2,4}\,e^1_{-1,3}\,e^2_{-1,2}\,e^3_{0,1}\,e^5_{1,1}\,e^5_{2,1}\,e^4_{3,1}$ and we will write also for short $P=e^3 e^5 e^4 e^1 e^2 e^3 \{e^5\}^2 e^4$. Here $\{e^5\}^2$ denotes $e^5 e^5$ and we will use this notation in the following. The weight of this path is given by
\[w(P) = t 4^{-a_{-4}} \cdot t 4^{-a_{-3}} \cdot 4^{-a_{-2}} \cdot 1 \cdot  2^{-a_{-1}} \cdot t 1^{-a_{0}} \cdot t 1^{-a_{1}}  \cdot t 1^{-a_{2}} \cdot 1^{-a_{3}} =\frac{t^5}{ 2^{a_{-1}} 4^{a_{-2}} 4^{a_{-3}} 4^{a_{-4}}     } \,, \]
which is one summand in the definition of $\zeta_5^t(a_{3},a_{2},a_{1},a_{0},a_{-1},a_{-2},a_{-3},a_{-4})$.
\end{ex}
The example illustrates the idea behind the following Lemma.
\begin{lemma}\label{lem:lem0}
For integers $i\leq j$ the sum of all paths weights of paths between the vertex $A=v_{i,N-1}^\circ$ and $B=v_{j+1,0}^\circ$ is given by
\[w(A,B) = \zeta^t_N(a_j, a_{j-1},\dots,a_i) \,.\]
\end{lemma}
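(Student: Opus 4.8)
The plan is to prove the identity by exhibiting a weight-preserving bijection between the directed paths $P : A \to B$ in $G^t_N$ and the index tuples $(m_1,\dots,m_r)$ with $0 < m_1 \le \cdots \le m_r < N$ appearing in the defining sum of $\zeta^t_N(a_j,\dots,a_i)$, where $r = j-i+1$. The geometry of the five edge types makes every such path strictly monotone: the $x$-coordinate is non-decreasing (it strictly increases along $e^2,e^3,e^4,e^5$ and is constant along $e^1$) and the $y$-coordinate is non-increasing (it strictly decreases along $e^1,e^2,e^4$ and is constant along the horizontal edges $e^3,e^5$). Hence any path from $A=v^\circ_{i,N-1}$ to $B=v^\circ_{j+1,0}$ uses exactly $r$ column-advancing edges, one leaving each column $x=i,\dots,j$, and realizes a total height drop of $N-1$; all intermediate vertices then automatically satisfy $0 \le y \le N-1$.

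To define the map I would record, for each column $x \in \{i,\dots,j\}$, the height $y_x$ at which the unique column-advancing edge leaves that column, and set $m_\ell := y_{j-\ell+1}$ for $\ell=1,\dots,r$. By monotonicity $y_i \ge y_{i+1} \ge \cdots \ge y_j \ge 1$, so $(m_1,\dots,m_r)$ is a valid non-decreasing tuple in $\{1,\dots,N-1\}$. The decisive observation is that the column-advancing edge out of column $x$ is \emph{horizontal} (type $e^3$ or $e^5$, the two edges carrying a factor $t$) precisely when $y_{x+1}=y_x$, and \emph{diagonal} (type $e^2$ or $e^4$) precisely when $y_{x+1}<y_x$: a horizontal step preserves the height and lands on a $\bullet$-vertex, from which no vertical edge exists, whereas a diagonal step lowers the height by one and lands on a $\circ$-vertex, from which the residual descent down to $y_{x+1}$ is realized by $e^1$-edges of weight $1$. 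Consequently the horizontal edges of $P$ are in bijection with the equalities $m_\ell = m_{\ell+1}$, so their total number equals $e(m_1,\dots,m_r)$.

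For bijectivity I would argue that a tuple reconstructs its path uniquely. Given the leaving heights, both the $\circ/\bullet$ state and the number of intervening $e^1$-edges are forced: one arrives in a column on a $\bullet$-vertex exactly after a horizontal edge (and then takes no vertical edge, leaving at once by $e^5$ or $e^4$ according as $y_{x+1}=y_x$ or not), and on a $\circ$-vertex at the start and after every diagonal edge (descending by $e^1$-edges to the prescribed leaving height before departing by $e^3$ or $e^2$). The two boundary descents need separate care: at the initial column $i$ one descends from $N-1$ to $y_i$ along $\circ$-vertices, and the last advancing edge, out of column $j$, must be diagonal so as to reach a $\circ$-vertex in column $j+1$ (a horizontal edge would land on a $\bullet$-vertex from which $B$ is unreachable), whence one descends to $B=v^\circ_{j+1,0}$; since $y_j = m_1 \ge 1$ this final descent stays at heights $>0$ and is legitimate. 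I expect this $\circ/\bullet$ bookkeeping, together with verifying existence and legitimacy of the two boundary descents, to be the main (though essentially routine) obstacle.

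Finally, the map is weight-preserving: each column-advancing edge out of column $x$ contributes the factor $y_x^{-a_x}$, with an extra $t$ exactly when it is horizontal, while all $e^1$-edges contribute $1$, so
\[ w(P) = t^{\#\{\text{horizontal edges of }P\}} \prod_{x=i}^{j} y_x^{-a_x} = t^{e(m_1,\dots,m_r)} \prod_{\ell=1}^{r} m_\ell^{-a_{j-\ell+1}}, \]
which is exactly the summand of $\zeta^t_N(a_j,\dots,a_i)$ indexed by $(m_1,\dots,m_r)$, since its $\ell$-th entry is $a_{j-\ell+1}$. Summing $w(P)$ over all paths $P:A\to B$ therefore reproduces the entire defining sum, yielding $w(A,B) = \zeta^t_N(a_j,\dots,a_i)$.
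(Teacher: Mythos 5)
Your proof is correct, but it follows a genuinely different route from the paper's. The paper argues by induction on $N$: it groups the terms of $\zeta^t_N(a_j,\dots,a_i)$ according to the size $g$ of the terminal block of equal indices $m_{r-g}=\dots=m_r$, obtaining the recursion
\[
\zeta^t_N(a_j,\dots,a_i)=\sum_{g=0}^{r-1} t^g \sum_{m_r=1}^{N-1} \frac{1}{m_r^{a_i+\dots+a_{i+g}}}\,\zeta^t_{m_r}(a_j,\dots,a_{i+g+1}),
\]
and matches this against the decomposition of each path according to its initial segment, which is either $\{e^1\}^a e^2$ or $\{e^1\}^a e^3 \{e^5\}^{g-1} e^4$; the induction hypothesis applied at the vertex $v^\circ_{i+g+1,m_r-1}$ then closes the argument. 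You instead build a global, weight-preserving bijection between paths $P:A\to B$ and tuples $0<m_1\leq\dots\leq m_r<N$, by recording the leaving heights $y_x$ of the column-advancing edges and verifying that the $\circ/\bullet$ bookkeeping forces unique reconstruction, that the final advancing edge must be diagonal, and that horizontal edges correspond exactly to equalities. Both arguments ultimately rest on the same structural feature of the graph (a $\bullet$-vertex has no vertical outgoing edge, so a horizontal step forces an immediate further advance at the same height, producing the $t$-weighted runs that encode blocks of equal indices), but your bijection identifies the two sides summand by summand without any induction, which is arguably more transparent; the paper's recursive formulation is more economical to write down and sets up the inductive template that is reused for the harder Lemma \ref{lem:lem2}.
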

\begin{proof}
We will proof this by induction on $N$. The case $N=1$ is clear, since both expressions are zero. 
For the case $N\geq 1$ we use a recursive expression for the interpolated multiple zeta values $\zeta^t$. Given numbers $0 < m_1 \leq \dots \leq m_r$ we recall that $e(m_1,\dots,m_r)$ is the numbers of $=$ between $m_1,m_2,\dots,m_r$. With this we have the following recursive expression
\begin{align*}
\zeta^t_N(a_j,a_{j-1},\dots,a_i) &= \sum_{0 \leq m_1 \leq \dots \leq m_r <N} \frac{t^{e(m_1,\dots,m_r)}}{m_1^{a_j} \dots m_r^{a_i}} \\
&= \sum_{g=0}^{r-1} t^g \sum_{0 < m_1 \leq ... \leq m_{r-g-1} < m_{r-g} = \dots = m_r <N} \frac{t^{e(m_1,\dots,m_{r-g-1})}}{m_1^{a_j} \dots m_r^{a_i}}\\
&=  \sum_{g=0}^{r-1} t^g \sum_{m_r=1}^{N-1} \frac{1}{m_r^{a_i+\dots+a_{i+g}}} \zeta^t_{m_r}(a_j,a_{j-1},\dots,a_{i+g+1}) \,.
\end{align*}
For $w(A,B)$ we have a similar recursive expression. Every path from $A$ to $B$ either starts with $\{e^1\}^a e^2$ for $a=0,\dots,N-2$ or it starts with $\{e^1\}^a e^3  \{e^5\}^{g-1} e^4$ for $a=0,\dots,N-2$ and $g=1,\dots,r-1$. Therefore setting $m_r=N-1-a$ we get the same recursive formula as before
\begin{align*}
w(A,B)  &= \sum_{g=0}^{r-1} t^g \sum_{m_r=1}^{N-1} \frac{1}{m_r^{a_i+\dots+a_{i+g-1}}} w(v_{i+g+1,m_r-1}^\circ,B) \,.
\end{align*}
By the induction hypothesis it is $ w(v_{i+g+1,m_r-1}^\circ,B)=\zeta^t_{m_r}(a_j,a_{j-1},\dots,a_{i+g+1})$ from which the statement follows.
\end{proof}
		
In the following we will explain the connection of truncated interpolated Schur multiple zeta values and the $t$-Lattice Graph. For this we need some notation, which will be used in the two main Lemmas afterward. 

\begin{definition}\label{def:F}\begin{enumerate}[i)]
\item Suppose we have a partition $\lambda$ and a tuple $b=(b_1,\dots,b_{\lambda_1})$  with $b_1 \geq b_2 \geq \dots \geq b_{\lambda_1} \geq 0$ and $\lambda'_j \geq b_j$ for all $j=1,\dots,{\lambda_1}$. With this we define the Young Tableaux $F(\lambda,b):=(\lambda , f_{i,j})$ with entries $f_{i,j} \in  \{  0,1 \}$ given by \[ f_{i,j} = \begin{cases} 1 \,, & \text{ if } i > b_i \\ 0 \,, & \text{ else}
\end{cases}\,.\]
As an example we have with  $\lambda=(4,2,2,1)$ ($\lambda'=(4,3,1,1)$) and $b=(2,1,1,0)$
\[ F_1 = F(\lambda,b) = F((4,3,1,1)',(2,1,1,0)) = {\ytableausetup{centertableaux, boxsize=1.3em}	\begin{ytableau}
	0 & 0 &  0 & 1 \\
	0 &  1\\
	1 & 1 \\
	1
	\end{ytableau}}\,. \]
\item We call a tableaux $F(\lambda,b)$ 1-\emph{ordered}, when there are no $(i,j), (i+1,j+1) \in D(\lambda)$, with $f_{i,j} = f_{i+1,j+1} = 1$. In analogy to the horizontal and vertical equalities we define
\begin{align*}
v_1(F(\lambda,b,M)) &=  \#\left\{ (i,j) \in D(\lambda) \mid f_{i,j} = f_{i+1,j} = 1\right\} \,, \\
h_1(F(\lambda,b,M)) &=  \#\left\{ (i,j) \in D(\lambda) \mid f_{i,j} = f_{i,j+1} = 1\right\}  \,,
\end{align*}
which counts the number of vertical and horizontal equalities of cells containing a $1$. For the above example it is  $v_1(F_1)=2$ and $h_1(F_1)=1$.
\end{enumerate}
		
\end{definition}

\begin{lemma} \label{lem:lem1} For $M \geq 1$, a partition $\lambda=(\lambda_1,\dots,\lambda_r)$ and a tuple $b=(b_1,\dots,b_{\lambda_1})$  with $b_1 \geq b_2 \geq \dots \geq b_{\lambda_1} \geq 0$ and $\lambda'_j \geq b_j$ for all $j=1,\dots,{\lambda_1}$, define the vertex sets $\mathcal{A} = \{A_1,\dots,A_{\lambda_1} \}$ and  $\mathcal{B} = \{B_1,\dots,B_{\lambda_1}\}$ by $A_j = v^\circ_{j-\lambda'_j,M}$ and $B_j = v^\circ_{j-b_j,M-1}$. Setting $F=F(\lambda,b)$ and $a = \prod_{(i,j) \in D(\lambda)} f_{i,j} \cdot a_{j-i}$ we have
\begin{equation} \label{eq:lem1}
\sum_{\mathcal{P} \,:\,\mathcal{A} \rightarrow \mathcal{B}} \sign\mathcal{P} \cdot w(\mathcal{P}) = \begin{cases}
M^{-a} \cdot t^{v_1(F)} \cdot  (1-t)^{h_1(F)}\,, & F \text{ is 1-ordered} \\
0 \,, & \text{else}
\end{cases} \,.
\end{equation}

\end{lemma}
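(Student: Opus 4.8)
The plan is to apply the Lemma of Lindström--Gessel--Viennot (Lemma \ref{lem:LGV}) to the graph $G^t_N$ to turn the signed sum on the left of \eqref{eq:lem1} into the determinant $\det(w(A_i,B_j))_{1\le i,j\le \lambda_1}$, and then to evaluate this determinant explicitly. Throughout I write $\alpha_i = i-\lambda'_i$ and $\beta_j = j-b_j$, so that $A_i = v^\circ_{\alpha_i,M}$ and $B_j = v^\circ_{\beta_j,M-1}$; from $\lambda'_1\ge\lambda'_2\ge\cdots$ and $b_1\ge b_2\ge\cdots$ both $(\alpha_i)$ and $(\beta_j)$ are strictly increasing, and $\lambda'_j\ge b_j$ gives $\alpha_j\le\beta_j$.

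First I would compute the single-step transfer weights $w(A_i,B_j)$. Since $A_i$ sits on level $M$ and $B_j$ on level $M-1$, and every edge of $G^t_N$ is non-increasing in the height $y$, each path from $A_i$ to $B_j$ performs exactly one descending step; moreover a path can only end on a white vertex at level $M-1$ if its descent comes last, since any move available at level $M-1$ leaves the white sublattice or drops below it. Hence between $A_i$ and $B_j$ there is at most one path, running horizontally along level $M$ (starting with $e^3$, continuing with $e^5$) and then descending once, so that
\begin{equation*}
w(A_i,B_j)=W(\alpha_i,\beta_j):=\begin{cases} 0, & \beta_j<\alpha_i,\\[2pt] 1, & \beta_j=\alpha_i,\\[2pt] t^{\,\beta_j-\alpha_i-1}\prod_{x=\alpha_i}^{\beta_j-1}M^{-a_x}, & \beta_j>\alpha_i,\end{cases}
\end{equation*}
the three cases corresponding to no path, the vertical edge $e^1$, and the word $e^3\{e^5\}^{\beta_j-\alpha_i-2}e^4$ (with $e^2$ as the sub-case $\beta_j=\alpha_i+1$).

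Next I would exploit separability. In every nonzero term of the permutation expansion of $\det(W(\alpha_i,\beta_j))$ the total power of $M$ equals $\sum_i\sum_{x=\alpha_i}^{\beta_{\sigma(i)}-1}a_x$, which telescopes to the $\sigma$-independent value $a=\sum_{(i,j)\in D(\lambda)}f_{i,j}\,a_{j-i}$ because the $1$'s of column $j$ of $F=F(\lambda,b)$ occupy exactly the diagonals $\alpha_j,\dots,\beta_j-1$. Factoring out the common $t^{N_1}M^{-a}$, where $N_1=\sum_j(\beta_j-\alpha_j)$ is the number of $1$'s, the ``long'' entries ($\beta_j>\alpha_i$) are separable, so that $W=\operatorname{diag}(\cdots)\,\widehat W\,\operatorname{diag}(\cdots)$ reduces matters to the purely $t$-combinatorial determinant $\det\widehat W$ with $\widehat W_{ij}=\phi(\beta_j-\alpha_i)$ and $\phi(d)=0,1,1/t$ for $d<0,\,d=0,\,d>0$. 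Under this dictionary the statistics translate cleanly: consecutive $1$'s within a column account for the factor $t^{v_1(F)}$; the equalities $\beta_j=\alpha_i$ (entries $\widehat W_{ij}=1$) correspond to horizontal adjacencies of $1$'s and are exactly what produce the factors $1-t$; and $F$ fails to be $1$-ordered precisely when two adjacent column spans overlap, i.e. $\beta_j>\alpha_{j+1}$ for some $j$.

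The remaining and main task is the evaluation of $\det\widehat W$, and this is where I expect the real work to lie. In the $1$-ordered case I would expand the polynomial $\det(t\,\widehat W)=\det\!\big(G-(1-t)D\big)$, with $G_{ij}=[\beta_j\ge\alpha_i]$ and $D_{ij}=[\beta_j=\alpha_i]$ a partial permutation matrix supported on the horizontal adjacencies, by multilinearity in the rows carrying an equality; the $1$-ordered hypothesis should force every contribution of degree $<h_1(F)$ in $(1-t)$ to cancel and the top term to reduce to a single monomial, giving $\det\widehat W=t^{\,v_1(F)-N_1}(1-t)^{h_1(F)}$ and hence $\det W=M^{-a}t^{v_1(F)}(1-t)^{h_1(F)}$. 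For the non-$1$-ordered case I would show $\det\widehat W=0$ by using that a block of mutually overlapping columns consists entirely of separable ``long'' entries and is therefore linearly dependent (in the small worked cases two such columns are in fact proportional). The hard part is precisely this determinant evaluation: making the degree-and-cancellation argument in the $(1-t)$-expansion rigorous, and pinning down the linear dependence that forces vanishing when $F$ is not $1$-ordered. This is genuinely delicate because the two white/black levels of $G^t_N$ allow two monotone paths to interchange endpoints without sharing a vertex, so non-identity permutations really do contribute to the sum and the vanishing is a consequence of signed cancellation rather than of a non-crossing obstruction.
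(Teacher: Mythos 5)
Your reduction steps are all correct, and they constitute a genuinely different route from the paper's: the paper never evaluates $\det(w(A_i,B_j))$ in this Lemma, but instead computes the signed path-system sum directly by induction on $\lambda_1$, adding one column at a time and pairing path systems via a tail-swapping involution near $A_{\Lambda+1}$ and $B_\Lambda$; the three cases ($B_\Lambda$ to the left of, below, or to the right of $A_{\Lambda+1}$, i.e.\ $\beta_\Lambda<\alpha_{\Lambda+1}$, $=$, $>$ in your notation $\alpha_i=i-\lambda'_i$, $\beta_j=j-b_j$) produce respectively a product structure, a factor $(1-t)$, and total cancellation. (Lemma \ref{lem:LGV} enters the paper only later, in the proof of Theorem \ref{thm:thm1harm}.) Up to the point where you stop, everything checks out: each $w(A_i,B_j)$ is a single monomial because a path from level $M$ to level $M-1$ descends exactly once and necessarily at its last step; the exponent of $M$ in any nonvanishing permutation term telescopes to $a$ independently of $\sigma$; and the rescaling $\det W=t^{N_1}M^{-a}\det\widehat W$ with $\widehat W_{ij}=\phi(\beta_j-\alpha_i)$ is legitimate, as is your dictionary between $v_1$, $h_1$, $1$-orderedness and the data $(\alpha_i)$, $(\beta_j)$.

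The genuine gap is that the entire content of the Lemma now sits in the evaluation of $\det\widehat W$, which you do not carry out, and both mechanisms you sketch for it are insufficient. For the non-$1$-ordered case, the claim that overlapping columns consist of ``long'' entries and are therefore pairwise proportional (hence linearly dependent) fails in general: take $\lambda=(4,3)$, $b=(1,0,0,0)$, so that $(\alpha_i)=(-1,0,1,3)$, $(\beta_j)=(0,2,3,4)$ and $F(\lambda,b)$ is not $1$-ordered since $\beta_2=2>\alpha_3=1$; writing $u=1/t$,
\[
\widehat W=\begin{pmatrix} u&u&u&u\\ 1&u&u&u\\ 0&u&u&u\\ 0&0&1&u \end{pmatrix}\,,
\]
and here no two rows and no two columns are proportional. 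The determinant does vanish, but only through the three-row relation $\text{(row 1)}=u\cdot\text{(row 2)}+(1-u)\cdot\text{(row 3)}$, which your argument does not produce. Likewise, in the $1$-ordered case the assertion that all contributions of degree $<h_1(F)$ in $(1-t)$ cancel and the top term collapses to a single monomial is exactly the statement to be proved; ``should force'' is not an argument. To close either half you would in effect expand $\det\widehat W$ along its last column and induct on $\lambda_1$, distinguishing the cases $\beta_{\lambda_1-1}<\alpha_{\lambda_1}$, $\beta_{\lambda_1-1}=\alpha_{\lambda_1}$, $\beta_{\lambda_1-1}>\alpha_{\lambda_1}$ --- that is, redo at the matrix level precisely the induction the paper performs on path systems. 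As it stands, your proposal is a correct reformulation of the problem, not yet a proof of it.
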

\begin{proof}
We will prove this by induction on $\lambda_1$. In the case $\lambda_1=1$ the Young Tableaux $F=F(\lambda,b)$ is just one column and therefore it is always 1-ordered and $h_1(F)=0$. In this case there is just one path $P$ from $A_1$ to $B_1$ given by $P=e_1$ if $b_1=\lambda_1'$, $P=e_2$ if $b_1=\lambda_1'-1$ and $P=e_3 \{e_5\}^{\lambda_1'-b_1-2} e_4$ otherwise. The weights are given by $w(P)=1$, $w(P)=M^{a_{1-\lambda'_1}}$ and $w(P)=t^{\lambda_1'-b_1-1} M^{a_{1-\lambda'_1} \cdot a_{1-(\lambda'_1-1)}  \dots  a_{1-(b_1+1)}}$ respectively. These are are the corresponding $\lambda_1=1$ cases of the right-hand side of \eqref{eq:lem1}. 

Now suppose \eqref{eq:lem1} is true for a fixed $\lambda_1 = \Lambda$, i.e.  the right-hand side gives the sum of the weights of all possible vertex-disjoint path systems in the following diagram.
\begin{figure}[!htb]
    \centering	
    \begin{tikzpicture}[scale=1.3]

\def\ofs{0.1}
\coordinate (A1) at (0,2+\ofs);
\coordinate (A12) at (0,2-\ofs);

\coordinate (AA1) at (0.5,2+\ofs);
\coordinate (AA12) at (0.5,2-\ofs);

\coordinate (A2) at (2,2+\ofs);
\coordinate (A22) at (2,2-\ofs);

\coordinate (A3) at (4,2+\ofs);
\coordinate (A32) at (4,2-\ofs);

\coordinate (A4) at (6,2+\ofs);
\coordinate (A42) at (6,2-\ofs);

\coordinate (B1) at (0,1);
\coordinate (BB1) at (0.5,1);
\coordinate (B2) at (2,1);
\coordinate (B3) at (4,1);
\coordinate (B4) at (6,1);
\def\rad{0.05}

%
%
%
%
%
%
%
%
%
%
%
%

\draw [black] ($(-1,1.80)$) node[above] {$M$};
\draw [black] ($(-0.5,1.85)$) node[above] {${\scriptsize \dots}$};

\draw [black] ($(-1.3,0.80)$) node[above] {$M-1$};
\draw [black] ($(-0.5,0.86)$) node[above] {${\footnotesize \dots}$};

\draw [black] ($(A1)+(0,0.1)$) node[above] {$A_1$};
\draw [black] ($(1,2.1+\ofs)+(0,0.1)$) node[above] {$\dots$};
\draw [black] ($(2,2.1+\ofs)+(0,0.1)$) node[above] {$\dots$};
\draw [black] ($(3,2.1+\ofs)+(0,0.1)$) node[above] {$\dots$};
\draw [black] ($(3,2.1+\ofs)+(0,0.1)$) node[above] {$\dots$};

\draw [black] ($(1.25,1.85)$) node[above] {$\dots$};
\draw [black] ($(3,1.85)$) node[above] {$\dots$};
\draw [black] ($(5,1.85)$) node[above] {$\dots$};

\draw [black] ($(1.25,0.85)$) node[above] {$\dots$};
\draw [black] ($(3,0.85)$) node[above] {$\dots$};
\draw [black] ($(5,0.85)$) node[above] {$\dots$};

\draw [black] ($(A3)+(0,0.1)$) node[above] {$A_{\Lambda}$};
\draw [black] ($(A3)+(0,0.1)$) node[above] {$A_{\Lambda}$};

\draw [black] ($(B2)-(0,0.1)$) node[below] {$B_{1}$};
\draw [black] ($(B4)-(0,0.1)$) node[below] {$B_{\Lambda}$};

\draw [black] ($(3,0.5)$) node[above] {$\dots$};
\draw [black] ($(4,0.5)$) node[above] {$\dots$};
\draw [black] ($(5,0.5)$) node[above] {$\dots$};

\fill[white] (A1) circle (\rad);
\draw (A1) circle (\rad);
\fill[black] (A12) circle (\rad);

\fill[white] (AA1) circle (\rad);
\draw (AA1) circle (\rad);
\fill[black] (AA12) circle (\rad);

\fill[white] (A2) circle (\rad);
\draw (A2) circle (\rad);
\fill[black] (A22) circle (\rad);

\fill[white] (A3) circle (\rad);
\draw (A3) circle (\rad);
\fill[black] (A32) circle (\rad);

\fill[white] (A4) circle (\rad);
\draw (A4) circle (\rad);
\fill[black] (A42) circle (\rad);

\fill[white] (B1) circle (\rad);
\draw (B1) circle (\rad);

\fill[white] (BB1) circle (\rad);
\draw (BB1) circle (\rad);

\fill[white] (B2) circle (\rad);
\draw (B2) circle (\rad);

\fill[white] (B3) circle (\rad);
\draw (B3) circle (\rad);

\fill[white] (B4) circle (\rad);
\draw (B4) circle (\rad);

\end{tikzpicture}
\end{figure}

We now want to add additional vertices $A_{\Lambda+1}$ and $B_{\Lambda+1}$ on the right of this diagram and define $\mathcal{A}^{(2)} = \{A_1,\dots,A_{\Lambda}, A_{\Lambda+1} \}$ and  $\mathcal{B}^{(2)}  = \{B_1,\dots,B_{\Lambda}, B_{\Lambda+1}\}$. Again by $\lambda=(\lambda_1,\dots,\lambda_l)$ with $\lambda_1 = \Lambda+1$ and $b=(b_1,\dots,b_{\Lambda+1})$ we denote the corresponding partition and tuple with  $A_j = v^\circ_{j-\lambda'_j,M}$ and $B_j = v^\circ_{j-b_j,M-1}$.

{\bf First case:} $B_{\Lambda}$ is on the left of $A_{\Lambda+1}$: In this case every path system from $\mathcal{A}^{(2)} $ to $\mathcal{B}^{(2)} $ consists of the ones from $\mathcal{A}$ to $\mathcal{B}$ plus a path from $A_{\Lambda+1}$ to $B_{\Lambda+1}$, since there can not be a path to the left from $A_{\lambda_1+1}$. In other words we have
\begin{equation}\label{eq:eqind1}
\sum_{\mathcal{P} \,:\,\mathcal{A}^{(2)} \rightarrow \mathcal{B}^{(2)}} \sign\mathcal{P} \cdot w(\mathcal{P}) = \sum_{\mathcal{P} \,:\,\mathcal{A} \rightarrow \mathcal{B}} \sign\mathcal{P} \cdot w(\mathcal{P}) \cdot \sum_{P: A_{\lambda_1+1} \rightarrow B_{\lambda_1+1} } w(P) \,.
\end{equation}

If $B_{\Lambda}$ is on the left of $A_{\Lambda+1}$, then it is $b_{\Lambda} \geq \lambda'_{\Lambda+1}$, which means that $F(\lambda,b)$ has the shape
\[ F(\lambda,b) = 	\ytableausetup{mathmode, boxsize=1em}
	\begin{ytableau}
	 \scriptstyle \ast &  \scriptstyle \ast  & \none[ \scriptstyle  \dots]
	& \scriptstyle \scriptstyle 0 &  \scriptstyle \ast \\
	\scriptstyle \ast & \scriptstyle \ast  & \none[ \scriptstyle \dots]
	& \none[ \scriptstyle  \vdots] \\
	\none[\scriptstyle  \vdots] & \none[ \scriptstyle  \vdots]
	& \none[\scriptstyle  \vdots]  & \scriptstyle 0 &  \scriptstyle \ast  \\
	\none[\scriptstyle  \vdots] & \none[ \scriptstyle  \vdots]
	& \none[\scriptstyle  \vdots]  & \none[ \scriptstyle  \iddots]\\
	\scriptstyle  \ast & \scriptstyle \ast \\
	\scriptstyle \ast
	\end{ytableau}
	\]
In particular $h_1(F(\lambda,b))$ does not change by removing the last (new) column. By the induction hypothesis and \eqref{eq:eqind1} we therefore obtain \eqref{eq:lem1} for the first case. 

{\bf Second case:}  $B_{\Lambda}$ is below $A_{\Lambda+1}$: In this case there can be a path from $A_{\Lambda+1}$ to $B_{\Lambda}$ given by an vertical edge  $e_1$. Then there is also a path $P_i : A_i \rightarrow B_{\Lambda+1}$, for some $i=1,\dots,\Lambda$, passing below $A_{\Lambda+1}$ on the black vertex $\bullet$. For each  vertex-disjoint path system $\mathcal{P} = (P_1,\dots,P_{\Lambda+1})$ with this configuration there is another vertex-disjoint path system $\mathcal{P}'=(P'_1,\dots,P'_{\Lambda+1})$, which equals $\mathcal{P}$ except for $P_i$ and $P_{\Lambda+1}$, which differ in the following way
\begin{figure}[!htb]
    \centering
	\begin{tikzpicture}[scale=1.5]

\def\ofs{0.1}
\coordinate (A) at (0,2-\ofs);
\coordinate (B) at (0,2+\ofs);
\coordinate (C) at (3*\ofs,2-0.05);

\coordinate (D) at (1,2-\ofs);
\coordinate (E) at (1,2+\ofs);
\coordinate (E1) at (1-0.5,2+\ofs+0.4);
\coordinate (E2) at (1-0.2,2+\ofs+0.4);
\coordinate (E3) at (1-0.05,2+\ofs+0.4);

\coordinate (F) at (2,2-\ofs);
\coordinate (F1) at (2+0.3,2-\ofs);
\coordinate (F2) at (2+0.3,2-0.3);
\coordinate (I) at (2,2+\ofs);

\coordinate (G) at (1,1+\ofs);
\coordinate (G1) at (1.3,1);
\coordinate (G2) at (1.3,1-\ofs);
\coordinate (G3) at (0.85,1-0.08);
\coordinate (H) at (1,1-\ofs);

\def\rad{0.06}

\draw [red,line width=0.5mm,-,dotted]  (A) to (C);
\draw [red,line width=0.5mm,-,dotted]  (B) to (C);
\draw [red,line width=0.5mm,->, shorten >= 3pt-\rad]  (C) to (D);
\draw [red,line width=0.5mm,->,shorten <= 3pt+\rad, shorten >= 3pt-\rad]  (D) to (F);

\draw [red,line width=0.5mm,-,dotted]  (F) to (F1);
\draw [red,line width=0.5mm,-,dotted]  (F) to (F2);

\draw [blue, line width=0.5mm,->,shorten <= 3pt+\rad, shorten >= 3pt+\rad]  (E) to [out=220,in=130] (G);

\draw [red] ($(B)-(0.1,\ofs)$) node[left] {$P_i$};
\draw [blue] (0.4,1.3) node[above] {$P_{\Lambda+1}$};
\draw (E) node[above] {$A_{\Lambda+1}$};
\draw (1,0.65) node[above] {$B_{\Lambda}$};

\fill[white] (B) circle (\rad);
\draw (B) circle (\rad);
\fill[black] (A) circle (\rad);

\fill[white] (E) circle (\rad);
\draw (E) circle (\rad);
\fill[black] (D) circle (\rad);

\fill[white] (I) circle (\rad);
\draw (I) circle (\rad);
\fill[black] (F) circle (\rad);

\fill[white] (G) circle (\rad);
\draw (G) circle (\rad);

\end{tikzpicture}
    \qquad \qquad 
    \begin{tikzpicture}[scale=1.5]

\def\ofs{0.1}
\coordinate (A) at (0,2-\ofs);
\coordinate (B) at (0,2+\ofs);
\coordinate (C) at (3*\ofs,2-0.2);

\coordinate (D) at (1,2-\ofs);
\coordinate (E) at (1,2+\ofs);
\coordinate (E1) at (1-0.5,2+\ofs+0.4);
\coordinate (E2) at (1-0.2,2+\ofs+0.4);
\coordinate (E3) at (1-0.05,2+\ofs+0.4);

\coordinate (F) at (2,2-\ofs);
\coordinate (F1) at (2+0.3,2-\ofs);
\coordinate (F2) at (2+0.3,2-0.3);
\coordinate (I) at (2,2+\ofs);

\coordinate (G) at (1,1+\ofs);
\coordinate (G1) at (1.3,1);
\coordinate (G2) at (1.3,1-\ofs);
\coordinate (G3) at (0.85,1-0.08);
\coordinate (H) at (1,1-\ofs);

\def\rad{0.06}

\draw [red,line width=0.5mm,-,dotted]  (A) to (C);
\draw [red,line width=0.5mm,-,dotted]  (B) to (C);
\draw [red,line width=0.5mm,->, shorten >= 3pt-\rad]  (C) to (G);

\draw [blue,line width=0.5mm,-,dotted]  (F) to (F1);
\draw [blue,line width=0.5mm,-,dotted]  (F) to (F2);

\draw [blue, line width=0.5mm,->,shorten <= 3pt+\rad, shorten >= 3pt+\rad]  (E) to (F);

\draw [red] ($(B)-(0.1,\ofs)$) node[left] {$P^\prime_{i}$};
\draw [blue] (1.5,1.5) node[above] {$P^\prime_{\Lambda+1}$};

\draw (E) node[above] {$A_{\Lambda+1}$};
\draw (1,0.65) node[above] {$B_{\Lambda}$};

\fill[white] (B) circle (\rad);
\draw (B) circle (\rad);
\fill[black] (A) circle (\rad);

\fill[white] (E) circle (\rad);
\draw (E) circle (\rad);
\fill[black] (D) circle (\rad);

\fill[white] (I) circle (\rad);
\draw (I) circle (\rad);
\fill[black] (F) circle (\rad);

\fill[white] (G) circle (\rad);
\draw (G) circle (\rad);

\end{tikzpicture}
\end{figure}

or which differ by (in the case $B_{\Lambda+1}$ is next to $B_{\Lambda}$)

\begin{figure}[!htb]
    \centering
	\begin{tikzpicture}[scale=1.5]

\def\ofs{0.1}
\coordinate (A) at (0,2-\ofs);
\coordinate (B) at (0,2+\ofs);
\coordinate (C) at (3*\ofs,2-0.05);

\coordinate (D) at (1,2-\ofs);
\coordinate (E) at (1,2+\ofs);
\coordinate (E1) at (1-0.5,2+\ofs+0.4);
\coordinate (E2) at (1-0.2,2+\ofs+0.4);
\coordinate (E3) at (1-0.05,2+\ofs+0.4);

\coordinate (F) at (2,2-\ofs);
\coordinate (F1) at (2+0.3,2-\ofs);
\coordinate (F2) at (2+0.3,2-0.3);
\coordinate (I) at (2,2+\ofs);

\coordinate (G) at (1,1+\ofs);
\coordinate (G2) at (2,1+\ofs);

\coordinate (H) at (1,1-\ofs);

\def\rad{0.06}

\draw [red,line width=0.5mm,-,dotted]  (A) to (C);
\draw [red,line width=0.5mm,-,dotted]  (B) to (C);
\draw [red,line width=0.5mm,->, shorten >= 3pt-\rad]  (C) to (D);
\draw [red,line width=0.5mm,->,shorten <= 3pt+\rad, shorten >= 3pt-\rad]  (D) to (G2);

\draw [blue, line width=0.5mm,->,shorten <= 3pt+\rad, shorten >= 3pt+\rad]  (E) to [out=220,in=130] (G);

\draw [red] ($(B)-(0.1,\ofs)$) node[left] {$P_i$};
\draw [blue] (0.4,1.3) node[above] {$P_{\Lambda+1}$};
\draw (E) node[above] {$A_{\Lambda+1}$};
\draw (1,0.65) node[above] {$B_{\Lambda}$};

\draw (2,0.65) node[above] {$B_{\Lambda+1}$};

\fill[white] (B) circle (\rad);
\draw (B) circle (\rad);
\fill[black] (A) circle (\rad);

\fill[white] (E) circle (\rad);
\draw (E) circle (\rad);
\fill[black] (D) circle (\rad);

\fill[white] (I) circle (\rad);
\draw (I) circle (\rad);
\fill[black] (F) circle (\rad);

\fill[white] (G) circle (\rad);
\draw (G) circle (\rad);
\fill[white] (G2) circle (\rad);
\draw (G2) circle (\rad);
\end{tikzpicture}
    \qquad \qquad 
    \begin{tikzpicture}[scale=1.5]

\def\ofs{0.1}
\coordinate (A) at (0,2-\ofs);
\coordinate (B) at (0,2+\ofs);
\coordinate (C) at (3*\ofs,2-0.2);

\coordinate (D) at (1,2-\ofs);
\coordinate (E) at (1,2+\ofs);
\coordinate (E1) at (1-0.5,2+\ofs+0.4);
\coordinate (E2) at (1-0.2,2+\ofs+0.4);
\coordinate (E3) at (1-0.05,2+\ofs+0.4);

\coordinate (F) at (2,2-\ofs);
\coordinate (F1) at (2+0.3,2-\ofs);
\coordinate (F2) at (2+0.3,2-0.3);
\coordinate (I) at (2,2+\ofs);

\coordinate (G) at (1,1+\ofs);
\coordinate (G1) at (1.3,1);
\coordinate (G2) at (1.3,1-\ofs);
\coordinate (G3) at (0.85,1-0.08);
\coordinate (H) at (1,1-\ofs);
\coordinate (G2) at (2,1+\ofs);
\def\rad{0.06}

\draw [red,line width=0.5mm,-,dotted]  (A) to (C);
\draw [red,line width=0.5mm,-,dotted]  (B) to (C);
\draw [red,line width=0.5mm,->, shorten >= 3pt-\rad]  (C) to (G);

\draw [blue, line width=0.5mm,->,shorten <= 3pt+\rad, shorten >= 3pt+\rad]  (E) to (G2);

\draw [red] ($(B)-(0.1,\ofs)$) node[left] {$P^\prime_{i}$};
\draw [blue] (1.6,1.7) node[above] {$P^\prime_{\Lambda+1}$};

\draw (E) node[above] {$A_{\Lambda+1}$};
\draw (1,0.65) node[above] {$B_{\Lambda}$};
\draw (2,0.65) node[above] {$B_{\Lambda+1}$};

\fill[white] (B) circle (\rad);
\draw (B) circle (\rad);
\fill[black] (A) circle (\rad);

\fill[white] (E) circle (\rad);
\draw (E) circle (\rad);
\fill[black] (D) circle (\rad);

\fill[white] (I) circle (\rad);
\draw (I) circle (\rad);
\fill[black] (F) circle (\rad);

\fill[white] (G) circle (\rad);
\draw (G) circle (\rad);
\fill[white] (G2) circle (\rad);
\draw (G2) circle (\rad);
\end{tikzpicture}
\end{figure}
In both cases these path systems always come in pairs and since it is $\sign(\mathcal{P}) w(\mathcal{P}) = -t \sign(\mathcal{P^\prime}) w(\mathcal{P}^\prime)$ their sum gives 
 \begin{equation} \label{eq:1mt}
 (1-t) \sign(\mathcal{P^\prime}) w(\mathcal{P}^\prime) \,.\
 \end{equation}
 
 When $B_{\Lambda}$ is below $A_{\Lambda+1}$ it is $b_{\Lambda} = \lambda_{\Lambda+1}-1$ and thus $F(\lambda,b)$ has the shape
 \[ F(\lambda,b) = 	\ytableausetup{mathmode, boxsize=1em}
 	\begin{ytableau}
 	 \scriptstyle \ast &  \scriptstyle \ast  & \none[ \scriptstyle  \dots]
 	& \scriptstyle \scriptstyle 0 &  \scriptstyle \ast \\
 	\scriptstyle \ast & \scriptstyle \ast  & \none[ \scriptstyle \dots]
 	& \none[ \scriptstyle  \vdots] & \none[ \scriptstyle  \vdots] \\ 	
 	 	\none[\scriptstyle  \vdots] & \none[ \scriptstyle  \vdots]
 	 	& \none[\scriptstyle  \vdots]  & \scriptstyle 0&  \scriptstyle \ast  \\
 	\none[\scriptstyle  \vdots] & \none[ \scriptstyle  \vdots]
 	& \none[\scriptstyle  \vdots]  & \scriptstyle 1 &  \scriptstyle 1  \\
 	\none[\scriptstyle  \vdots] & \none[ \scriptstyle  \vdots]
 	& \none[\scriptstyle  \vdots]  & \none[ \scriptstyle  \iddots]\\
 	\scriptstyle  \ast & \scriptstyle \ast \\
 	\scriptstyle \ast
 	\end{ytableau}
 	\]
 	and therefore the new last column increases the number of horizontal equalities $h_1$ exactly by one. This is resembled by the additional factor of $(1-t)$ in \eqref{eq:1mt}, i.e. the induction hypothesis implies \eqref{eq:lem1} for the second case. 
 	
 {\bf Third case:}  $B_{\Lambda}$ is on the right of $A_{\Lambda+1}$: First we notice that  $B_{\Lambda}$ can just be one step to the right of $A_{\Lambda+1}$, since otherwise there can't be two vertex disjoint paths which end in $B_{\Lambda}$ and $B_{\Lambda+1}$. Similar to the second case we have for each  vertex-disjoint path system $\mathcal{P} = (P_1,\dots,P_{\Lambda+1})$ a  vertex-disjoint path system $\mathcal{P}'=(P'_1,\dots,P'_{\Lambda+1})$, which equals $\mathcal{P}$ except for $P_i$ and $P_{\Lambda+1}$, which differ in the following way
 
 \begin{figure}[!htb]
     \centering
 	\begin{tikzpicture}[scale=1.5]

\def\ofs{0.1}
\coordinate (A) at (0,2-\ofs);
\coordinate (B) at (0,2+\ofs);
\coordinate (C) at (3*\ofs,2-0.05);

\coordinate (D) at (1,2-\ofs);
\coordinate (E) at (1,2+\ofs);
\coordinate (E1) at (1-0.5,2+\ofs+0.4);
\coordinate (E2) at (1-0.2,2+\ofs+0.4);
\coordinate (E3) at (1-0.05,2+\ofs+0.4);

\coordinate (F) at (2,2-\ofs);
\coordinate (F1) at (2+0.3,2-\ofs);
\coordinate (F2) at (2+0.3,2-0.3);
\coordinate (I) at (2,2+\ofs);

\coordinate (G) at (1,1+\ofs);
\coordinate (G1) at (1.3,1);
\coordinate (G2) at (1.3,1-\ofs);
\coordinate (G3) at (0.85,1-0.08);
\coordinate (H) at (1,1-\ofs);

\coordinate (J) at (2,1+\ofs);
\coordinate (J1) at (2.3,1);
\coordinate (J2) at (2.3,1-\ofs);
\coordinate (J3) at (1.85,1-0.08);

\coordinate (K) at (2,1-\ofs);

\def\rad{0.06}

\draw [red,line width=0.5mm,-,dotted]  (A) to (C);
\draw [red,line width=0.5mm,-,dotted]  (B) to (C);
\draw [red,line width=0.5mm,->, shorten >= 3pt-\rad]  (C) to (D);
\draw [red,line width=0.5mm,->,shorten <= 3pt+\rad, shorten >= 3pt-\rad]  (D) to (F);

\draw [red,line width=0.5mm,-,dotted]  (F) to (F1);
\draw [red,line width=0.5mm,-,dotted]  (F) to (F2);

\draw [blue, line width=0.5mm,->,shorten <= 3pt+\rad, shorten >= 3pt+\rad]  (E) to (J);

\draw [red] ($(B)-(0.1,\ofs)$) node[left] {$P_i$};

\draw (E) node[above] {$A_{\Lambda+1}$};
\draw (2,0.65) node[above] {$B_{\Lambda}$};
\draw [blue] (1.2,1.3) node[above] {$P^\prime_{\Lambda+1}$};

\fill[white] (B) circle (\rad);
\draw (B) circle (\rad);
\fill[black] (A) circle (\rad);

\fill[white] (E) circle (\rad);
\draw (E) circle (\rad);
\fill[black] (D) circle (\rad);

\fill[white] (I) circle (\rad);
\draw (I) circle (\rad);
\fill[black] (F) circle (\rad);

\fill[white] (G) circle (\rad);
\draw (G) circle (\rad);

\fill[white] (J) circle (\rad);
\draw (J) circle (\rad);

\end{tikzpicture}
     \qquad  \qquad 
     \begin{tikzpicture}[scale=1.5]

\def\ofs{0.1}
\coordinate (A) at (0,2-\ofs);
\coordinate (B) at (0,2+\ofs);
\coordinate (C) at (3*\ofs,2-0.05);

\coordinate (D) at (1,2-\ofs);
\coordinate (E) at (1,2+\ofs);
\coordinate (E1) at (1-0.5,2+\ofs+0.4);
\coordinate (E2) at (1-0.2,2+\ofs+0.4);
\coordinate (E3) at (1-0.05,2+\ofs+0.4);

\coordinate (F) at (2,2-\ofs);
\coordinate (F1) at (2+0.3,2-\ofs);
\coordinate (F2) at (2+0.3,2-0.3);
\coordinate (I) at (2,2+\ofs);

\coordinate (G) at (1,1+\ofs);
\coordinate (G1) at (1.3,1);
\coordinate (G2) at (1.3,1-\ofs);
\coordinate (G3) at (0.85,1-0.08);
\coordinate (H) at (1,1-\ofs);

\coordinate (J) at (2,1+\ofs);
\coordinate (J1) at (2.3,1);
\coordinate (J2) at (2.3,1-\ofs);
\coordinate (J3) at (1.85,1-0.08);

\coordinate (K) at (2,1-\ofs);

\def\rad{0.06}

\draw [red,line width=0.5mm,-,dotted]  (A) to (C);
\draw [red,line width=0.5mm,-,dotted]  (B) to (C);
\draw [red,line width=0.5mm,->, shorten >= 3pt-\rad]  (C) to (D);
\draw [red,line width=0.5mm,->,shorten <= 3pt+\rad, shorten >= 3pt-\rad]  (D) to (J);

\draw [blue,line width=0.5mm,-,dotted]  (F) to (F1);
\draw [blue,line width=0.5mm,-,dotted]  (F) to (F2);

\draw [blue, line width=0.5mm,->,shorten <= 3pt+\rad, shorten >= 3pt+\rad]  (E) to (F);

\draw [blue] (2.7,1.6) node[above] {$P^\prime_{\Lambda+1}$};

\draw [red] ($(B)-(0.1,\ofs)$) node[left] {$P^\prime_i$};

\draw (E) node[above] {$A_{\Lambda+1}$};
\draw (2,0.65) node[above] {$B_{\Lambda}$};

\fill[white] (B) circle (\rad);
\draw (B) circle (\rad);
\fill[black] (A) circle (\rad);

\fill[white] (E) circle (\rad);
\draw (E) circle (\rad);
\fill[black] (D) circle (\rad);

\fill[white] (I) circle (\rad);
\draw (I) circle (\rad);
\fill[black] (F) circle (\rad);

\fill[white] (G) circle (\rad);
\draw (G) circle (\rad);

\fill[white] (J) circle (\rad);
\draw (J) circle (\rad);

\end{tikzpicture}
 \end{figure}
It is $\sign(\mathcal{P}) w(\mathcal{P}) = - \sign(\mathcal{P^\prime}) w(\mathcal{P}^\prime)$	and therefore their sum gives in total $0$. Since these path systems always come in pairs, the total sum of path weights vanishes in this case. 
When $B_{\Lambda}$ is on the right of $A_{\Lambda+1}$ it is $b_{\Lambda} < \lambda_{\Lambda+1}$ and thus $F(\lambda,b)$ has the shape
 		 \[ F(\lambda,b) = 	\ytableausetup{mathmode, boxsize=1em}
 		 	\begin{ytableau}
 		 	 \scriptstyle \ast &  \scriptstyle \ast  & \none[ \scriptstyle  \dots]
 		 	& \scriptstyle \scriptstyle \ast &  \scriptstyle \ast \\
 		 	\scriptstyle \ast & \scriptstyle \ast  & \none[ \scriptstyle \dots]
 		 	& \none[ \scriptstyle  \vdots] & \none[ \scriptstyle  \vdots] \\ 	
 		 	 	\none[\scriptstyle  \vdots] & \none[ \scriptstyle  \vdots]
 		 	 	& \none[\scriptstyle  \vdots]  & \scriptstyle 1&  \scriptstyle \ast  \\
 		 	\none[\scriptstyle  \vdots] & \none[ \scriptstyle  \vdots]
 		 	& \none[\scriptstyle  \vdots]  & \scriptstyle 1 &  \scriptstyle 1  \\
 		 	\none[\scriptstyle  \vdots] & \none[ \scriptstyle  \vdots]
 		 	& \none[\scriptstyle  \vdots]  & \none[ \scriptstyle  \iddots]\\
 		 	\scriptstyle  \ast & \scriptstyle \ast \\
 		 	\scriptstyle \ast
 		 	\end{ytableau}
 		 	\]
 		 	which is clearly not 1-ordered and therefore the weight of the path system should vanish, as we showed. 
\end{proof}

We will now give the connection of path systems in the $t$-Lattice and the truncated interpolated Schur multiple zeta value. For this we fix a $N\geq 1$ and define for a partition $\lambda = (\lambda_1,\dots,\lambda_h)$ with conjugate $\lambda^\prime=(\lambda^\prime_1,\dots,\lambda^\prime_{\lambda_1})$ the sets $\mathcal{A}_\lambda := \{A_1,\dots,A_{\lambda_1}\} \subset V_N$ and $\mathcal{B}_\lambda := \{B_1,\dots,B_{\lambda_1}\} \subset V_N$ by
\begin{align}\label{def:Alam}
A_j = v^\circ_{j-\lambda_j^\prime,N-1}\quad \text{ and } \quad B_j = v^\circ_{j,0} \,.
\end{align}


With this notation we can state the following Lemma.
\begin{lemma}\label{lem:lem2}The truncated interpolated Schur multiple zeta value $\zeta^t_N(\kk) $ for the Young tableau $\kk=(\lambda, k_{i,j} )$ with $k_{i,j} = a_{j-i} $ is given by
\[  \zeta^t_N(\kk) = \sum_{\mathcal{P} \,:\,\mathcal{A}_\lambda \rightarrow \mathcal{B}_\lambda} \sign\mathcal{P} \cdot w(\mathcal{P})  \,.  \]
\end{lemma}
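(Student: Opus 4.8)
The plan is to compute the signed path-system sum by slicing the $t$-Lattice graph into its $N-1$ horizontal slabs and applying Lemma \ref{lem:lem1} to each slab separately. Since every edge that lowers the $y$-coordinate does so by exactly one unit and arrives at a $\circ$-vertex, each of the $\lambda_1$ paths passes through exactly one $\circ$-vertex at each level $0 \le M \le N-1$, namely its entry point; writing this vertex as $v^\circ_{j-c^{(M)}_j,M}$ records the position of the $j$-th path at height $M$. A vertex-disjoint path system $\mathcal{P}\colon \mathcal{A}_\lambda \to \mathcal{B}_\lambda$ is therefore the same datum as a chain of tuples $\lambda' = c^{(N-1)} \ge c^{(N-2)} \ge \dots \ge c^{(0)} = \mathbf{0}$ (non-increasing componentwise along the chain, since paths only move to the right as they descend) together with, on each slab $M\to M-1$, a vertex-disjoint system joining $\{v^\circ_{j-c^{(M)}_j,M}\}$ to $\{v^\circ_{j-c^{(M-1)}_j,M-1}\}$. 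The weight of $\mathcal{P}$ is the product of the weights of these slab-systems, and I will argue that its sign is the product of the slab signs as well.

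First I would make the slicing precise and establish the sign factorization: the global permutation matching $A_i$ to $B_{\sigma(i)}$ factors as the composition of the per-slab permutations obtained by reading off, at each level, which ordered slot each path occupies, so that $\sign\mathcal{P} = \prod_{M} \sign \mathcal{P}_M$. Granting this, the full sum factorizes over slabs, and applying Lemma \ref{lem:lem1} to the slab $M \to M-1$ (with its ``$\lambda'$'' equal to $c^{(M)}$ and its ``$b$'' equal to $c^{(M-1)}$) replaces that slab's contribution by $M^{-a^{(M)}} t^{v_1(F^{(M)})}(1-t)^{h_1(F^{(M)})}$ if the associated tableau $F^{(M)} = F(\,\cdot\,,c^{(M-1)})$ is $1$-ordered, and by $0$ otherwise.

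It then remains to identify the resulting sum over chains with $\zeta^t_N(\kk)$. To a surviving chain I associate the tableau $\mm = (\lambda,(m_{i,j}))$ where $m_{i,j}$ is the unique level $M$ at which cell $(i,j)$ becomes a ``$1$'', i.e. $c^{(M-1)}_j < i \le c^{(M)}_j$; equivalently $c^{(M)}_j = \#\{i : m_{i,j} \le M\}$. I would then check the four matching statements: the nesting and monotonicity of the $c^{(M)}$ translate into $m_{i,j} \le m_{i+1,j}$ and $m_{i,j} \le m_{i,j+1}$; the requirement that every $F^{(M)}$ be $1$-ordered is exactly $m_{i,j} < m_{i+1,j+1}$, so the chain corresponds to a genuine element of $\OYT_N(\lambda)$; the exponents combine as $\prod_M M^{-a^{(M)}} = \prod_{(i,j)\in D(\lambda)} m_{i,j}^{-a_{j-i}}$, since each cell contributes the factor $a_{j-i}$ only at level $m_{i,j}$; and finally $\sum_M v_1(F^{(M)}) = v(\mm)$ and $\sum_M h_1(F^{(M)}) = h(\mm)$, because a vertical (resp. horizontal) equality of $\mm$ is precisely a pair of adjacent $1$'s appearing in the single slab indexed by their common value. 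Summing over all chains then yields $\sum_{\mm \in \OYT_N(\lambda)} t^{v(\mm)}(1-t)^{h(\mm)} \prod m_{i,j}^{-a_{j-i}} = \zeta^t_N(\kk)$, as desired. One may equally run this as an induction on $N$, peeling off only the top slab with Lemma \ref{lem:lem1} and invoking the statement for $N-1$ and the partition conjugate to $c^{(N-2)}$.

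The step I expect to be the main obstacle is the sign factorization across slabs: one must verify that decomposing a vertex-disjoint system into its one-level pieces is compatible with signs, i.e. that the per-slab permutations handed to Lemma \ref{lem:lem1} multiply to $\sign\mathcal{P}$, and that the sign-cancelling pairs $(\mathcal{P},\mathcal{P}')$ internal to Lemma \ref{lem:lem1} (which produce the $(1-t)$ factors) are correctly localized within a single slab. Everything else reduces to the bookkeeping bijection between profile-chains and ordered Young tableaux described above.
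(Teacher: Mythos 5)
Your proposal is correct and takes essentially the same route as the paper: the paper also decomposes vertex-disjoint path systems by their white-vertex profiles at each horizontal level and applies Lemma \ref{lem:lem1} slab-wise, organizing the bookkeeping as an induction on $N$ that peels off only the top slab and matches the result against the expansion of $\zeta^t_N(\kk)$ according to which cells of $\mm$ equal $N-1$ --- exactly the inductive variant you mention at the end. The sign factorization you flag as the main obstacle is implicitly the same observation the paper uses (per-slab slot permutations compose to the global permutation, and gluing slab systems preserves vertex-disjointness because consecutive slabs share only the prescribed white vertices), so no new difficulty arises there.
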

\begin{proof}
We will prove the statement by induction on $N$, where the $N=1$ case is clear since both sides are zero. So we assume in the following, that the statement is true for $N-1$. For a fixed vertex-disjoint path system $\mathcal{P} \,:\,\mathcal{A}_\lambda \rightarrow \mathcal{B}_\lambda$ we get a unique set $\mathcal{A}'=\{A'_1,\dots,A'_{\lambda_1}\}$ with $A'_j = v^\circ_{j-\delta'_j, N-2}$ and $ 0 \leq \delta'_j \leq \lambda'_j$ for $j=1,\dots,\lambda_1$. These are exactly the white edges $v^{\circ}_{x,y}$  on $\mathcal{P}$ with  $y=N-2$.  From this we obtain two vertex-disjoint path systems $\mathcal{P}^0 : \mathcal{A}_\lambda \rightarrow \mathcal{A}'$ and $\mathcal{P}' : \mathcal{A}'\rightarrow \mathcal{B}_\lambda$, such that $\mathcal{P}$ is the composition of these two and
\[ \sign\mathcal{P} \cdot w(\mathcal{P}) =  \sign\mathcal{P}^0 \cdot w(\mathcal{P}^0) \cdot  \sign\mathcal{P}' \cdot w(\mathcal{P}')\,.\]

Conversely we define for $\mathfrak{d}=(\delta'_1,\delta'_2 , \dots,\delta'_{\lambda_1})$, where  $\delta'_1 \geq \delta'_2 \geq \dots \geq \delta'_{\lambda_1} \geq 0$ with $ 0 \leq \delta'_j \leq \lambda'_j$ for $j=1,\dots,\lambda_1$ a set  $\mathcal{A}'(\mathfrak{d})=\{A'_1,\dots,A'_{\lambda_1}\}$ with $A'_j = v^\circ_{j-\delta'_j, N-2}$. With this we get
\[\sum_{\mathcal{P} \,:\,\mathcal{A}_\lambda \rightarrow \mathcal{B}_\lambda} \sign\mathcal{P} \cdot w(\mathcal{P})  = \sum_{\substack{\mathfrak{d}}} \left(  \sum_{\mathcal{P}^0 \,:\,\mathcal{A}_\lambda \rightarrow \mathcal{A}'(\mathfrak{d})} \sign\mathcal{P}^0  \cdot w(\mathcal{P}^0 ) \right) \left(  \sum_{\mathcal{P}' \,:\,\mathcal{A}'(\mathfrak{d}) \rightarrow \mathcal{B}_\lambda} \sign\mathcal{P}' \cdot w(\mathcal{P})' \right) \,,\]
where the first sum on the right-hand side runs over all $\mathfrak{d}=(\delta'_1,\delta'_2 , \dots,\delta'_{\lambda_1})$ with the above mentioned properties. Now we consider for such a  $\mathfrak{d}$ the number $w=\max\{ j \mid \delta'_j \neq 0\}$, which gives a partition $\delta'(\mathfrak{d}) = (\delta'_1 , \dots , \delta'_w)$ with conjugate $\delta(\mathfrak{d}) = (\delta_1,\dots, \delta_{\delta'_1})$. We will now explain that the induction hypothesis implies
\begin{equation}\label{eq:indhyp}
\zeta^t_{N-1}(\kk(\mathfrak{d}))\overset{!}{=}  \sum_{\mathcal{P}' \,:\,\mathcal{A}'(\mathfrak{d}) \rightarrow \mathcal{B}_\lambda} \sign\mathcal{P}' \cdot w(\mathcal{P})' \,,
\end{equation}
where $\kk(\mathfrak{d}) = (\delta(\mathfrak{d}), k_{i,j})$: All possible paths in a vertex-disjoint path system $\mathcal{P}' \,:\,\mathcal{A}'(\mathfrak{d}) \rightarrow \mathcal{B}_\lambda$ from $A'_{w+1},\dots,A'_{\lambda_1}$ to $B_{w+1},\dots,B_{\lambda_1}$ are given by the straight vertical paths consisting just of edges $e^1_{x,y}$. Since $w(e^1_{x,y})=1$ the only paths giving non trivial contribution to the right-hand side of \eqref{eq:indhyp} are the paths from  $A'_{1},\dots,A'_{w}$ to $B_{1},\dots,B_{w}$. These are exactly the paths in the sum of the left-hand side of the statement in the Theorem for the $N-1$ case, from which \eqref{eq:indhyp} follows. In total we therefore have 
\begin{equation}\label{eq:lem2e1}
\sum_{\mathcal{P} \,:\,\mathcal{A}_\lambda \rightarrow \mathcal{B}_\lambda} \sign\mathcal{P} \cdot w(\mathcal{P})  = \sum_{\substack{\mathfrak{d}}} \left(  \sum_{\mathcal{P}^0 \,:\,\mathcal{A}_\lambda \rightarrow \mathcal{A}'(\mathfrak{d})} \sign\mathcal{P}^0  \cdot w(\mathcal{P}^0 ) \right) \zeta^t_{N-1}(\kk(\mathfrak{d}))\,.
\end{equation}
We now want to show that this equals $\zeta_N^t(\kk)$. Recall that by definition we have 

\begin{equation} \label{eq:schurdef2}\zeta_N^t(\kk) = \sum_{\substack{\mm \in \OYT_N(\lambda)\\\mm = (\lambda, (m_{i,j}))}} t^{v(\mm)}  (1-t)^{h(\mm)} \prod_{(i,j) \in D(\lambda)} \frac{1}{m_{i,j}^{k_{i,j}}} \,. 
\end{equation}
This sum can be split into the different ways of $\mm$ being filled with $N-1$. For each $\mathfrak{d}=(\delta'_1,\delta'_2 , \dots,\delta'_{\lambda_1})$ we can associate a group of ordered Young tableaux $\mm = (\lambda, (m_{i,j})) \in \OYT_N(\lambda)$ with $m_{i,j} = N-1$ for $i > \delta'_i$ and $m_{i,j}  \leq N-1$ for $i \leq \delta'_i$. Setting \footnote{Here $\delta$ denotes the Kronecker-delta, i.e.  $\delta_{a,b}=1$ if $a=b$ and $\delta_{a,b}=0$ otherwise.} $F(\mathfrak{d}) = (\lambda, (\delta_{m_{i,j},N-1}) )$ and $a(\mathfrak{d}) = \prod_{(i,j) \in D(\lambda)} \delta_{m_{i,j},N-1} \cdot a_{j-i}$ equation \eqref{eq:schurdef2} can therefore be written as 
\begin{equation}\label{eq:lem2eq3} \zeta_N^t(\kk) =  \sum_{\substack{\mathfrak{d}}} \left( (N-1)^{-a(\mathfrak{d})} \cdot t^{v_1(F(\mathfrak{d}))} \cdot  (1-t)^{h_1(F(\mathfrak{d}))} \right) \zeta^t_{N-1}(\kk(\mathfrak{d})) \,. 
\end{equation}
Now we can use Lemma \ref{lem:lem1} for the case $b=\mathfrak{d}$ and $M=N-1$ to obtain that \eqref{eq:lem2eq3} equals \eqref{eq:lem2e1}.
\end{proof}

\begin{theorem}\label{thm:thm1harm}
For a sequence  $(a_i)_{i\in \Z}$ of arbitrary integers $a_i \in \Z$, a natural number $N\geq 1$ and a Young tableau given by $\kk=(\lambda, (k_{i,j}))$ with $k_{i,j}=a_{j-i}$, we have the following identity
	\[ \zeta^t_N(\kk) = \det(\, \zeta^t_N(a_{j-1}, a_{j-2}, \dots , a_{j-(\lambda^\prime_i+j-i)}) \,)_{1\leq i,j \leq \lambda_1}\,,\]
	where we set $\zeta^t_N(a_{j-1}, a_{j-2}, \dots , a_{j-(\lambda^\prime_i+j-i)})$ to be $\begin{cases}
1 \, \text{  if  }  \lambda^\prime_i-i +j =0  \\
0 \, \text{  if  }  \lambda^\prime_i-i +j <0 
\end{cases}$.
\end{theorem}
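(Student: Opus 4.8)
The plan is to read the statement off directly from the three preparatory lemmas, since the combinatorial heavy lifting has already been carried out in Lemma~\ref{lem:lem2}. First I would invoke Lemma~\ref{lem:lem2}, which identifies the truncated interpolated Schur multiple zeta value with a signed sum over vertex-disjoint path systems in the $t$-Lattice Graph $G^t_N$:
\[\zeta^t_N(\kk) = \sum_{\mathcal{P} \,:\,\mathcal{A}_\lambda \rightarrow \mathcal{B}_\lambda} \sign\mathcal{P} \cdot w(\mathcal{P})\,,\]
with $\mathcal{A}_\lambda = \{A_1,\dots,A_{\lambda_1}\}$ and $\mathcal{B}_\lambda = \{B_1,\dots,B_{\lambda_1}\}$ where $A_j = v^\circ_{j-\lambda'_j,N-1}$ and $B_j = v^\circ_{j,0}$ as in \eqref{def:Alam}. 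This rewrites the left-hand side as exactly the expression appearing on the left of the Lindstr\"om--Gessel--Viennot identity.

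Next I would apply Lemma~\ref{lem:LGV} with $\mathcal{A}=\mathcal{A}_\lambda$, $\mathcal{B}=\mathcal{B}_\lambda$ and $n=\lambda_1$, turning the signed path-system sum into $\det(w(A_i,B_j))_{1\leq i,j\leq \lambda_1}$. It then remains only to compute each matrix entry $w(A_i,B_j)$, the total weight of all paths from $A_i = v^\circ_{i-\lambda'_i,N-1}$ to $B_j = v^\circ_{j,0}$, and to match it with the $(i,j)$-entry of the theorem. Here Lemma~\ref{lem:lem0} does the work: applied with source vertex $v^\circ_{i-\lambda'_i,N-1}$ and target vertex $v^\circ_{j,0}=v^\circ_{(j-1)+1,0}$, it yields $w(A_i,B_j) = \zeta^t_N(a_{j-1},a_{j-2},\dots,a_{i-\lambda'_i})$ whenever $i-\lambda'_i \leq j-1$. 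A one-line index computation, $j-(\lambda'_i+j-i)=i-\lambda'_i$, shows this is precisely $\zeta^t_N(a_{j-1},a_{j-2},\dots,a_{j-(\lambda'_i+j-i)})$, and the argument string has length $\lambda'_i+j-i\geq 1$, i.e. this covers exactly the regime $\lambda'_i-i+j\geq 1$.

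I expect the two boundary conventions to fall out of the geometry of $G^t_N$ rather than requiring separate combinatorial input. Since every edge of the graph either strictly increases the $x$-coordinate or strictly decreases the $y$-coordinate, the potential $\phi(v^\ast_{x,y})=x-y$ strictly increases along every edge; this simultaneously shows that $G^t_N$ is acyclic and that $x$-coordinates are non-decreasing along any path. Consequently, when $\lambda'_i-i+j=0$ the source and target share the $x$-coordinate $j$, so the unique admissible path uses only the vertical edges $e^1$, each of weight $1$, giving $w(A_i,B_j)=1$; and when $\lambda'_i-i+j<0$ the source lies strictly to the right of the target, no path exists, and $w(A_i,B_j)=0$. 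These agree with the stipulated values, so the three cases together identify $\det(w(A_i,B_j))$ with the determinant in the theorem. The genuine difficulty has already been absorbed into Lemmas~\ref{lem:lem1} and \ref{lem:lem2}; for the theorem itself the only points demanding care are the index bookkeeping above and the mild technical observation that, although $V_N$ has $x\in\Z$, only finitely many vertices can both be reached from $\mathcal{A}_\lambda$ and reach $\mathcal{B}_\lambda$, so one may legitimately restrict to a finite acyclic subgraph before invoking Lemma~\ref{lem:LGV} without altering any of the path sums.
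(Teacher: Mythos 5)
Your proposal is correct and follows essentially the same route as the paper's own proof: Lemma~\ref{lem:lem2} identifies $\zeta^t_N(\kk)$ with the signed path-system sum, Lemma~\ref{lem:LGV} converts it into $\det(w(A_i,B_j))_{1\leq i,j\leq\lambda_1}$, and the entries are evaluated via Lemma~\ref{lem:lem0} together with the same geometric observations for the degenerate cases $\lambda'_i-i+j=0$ (only vertical edges, weight $1$) and $\lambda'_i-i+j<0$ (no path, weight $0$). Your additional remark about restricting to a finite subgraph before invoking Lemma~\ref{lem:LGV} (justified by the potential $x-y$ increasing along edges) addresses a point the paper passes over silently, and is a welcome refinement rather than a different method.
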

\begin{proof}
Define as before the sets $\mathcal{A}_\lambda := \{A_1,\dots,A_{\lambda_1}\} \subset V_N$ and $\mathcal{B}_\lambda := \{B_1,\dots,B_{\lambda_1}\} \subset V_N$ by $A_j = v^\circ_{j-\lambda_j^\prime,N-1}$ and  $B_j = v^\circ_{j,0}$.
By the Lemma of Lindstr\"om, Gessel, Viennot we have that 
\begin{equation}\label{eq:lgveq}
  \sum_{\mathcal{P} \,:\,\mathcal{A}_\lambda \rightarrow \mathcal{B}_\lambda} \sign\mathcal{P} \cdot w(\mathcal{P})= \det( w(A_i,B_j) )_{1\leq i,j \leq n}\,. 
   \end{equation}
For $\lambda'_i + j -i =0$ the vertex $A_i$ has the same $x$-coordinate as $B_j$ and therefore the only possible path from $A_i$ to $B_j$ is a straight vertical path, i.e $w(A_i,B_j)=1$. In the case $\lambda'_i + j -i  < 0$ the vertex  $B_j$ is on the left of $A_i$ and therefore $w(A_i,B_j)=0$, since there is no path from $A_i$ to $B_j$. In the case $\lambda'_i + j -i >0$ we get by Lemma \ref{lem:lem0} that $w(A_i,B_j) = \zeta^t_N(a_{j-1}, a_{j-2}, \dots , a_{j-(\lambda^\prime_i+j-i)})$. Now the statement follows by using Lemma $\ref{lem:lem2}$ on the left-hand side of \eqref{eq:lgveq}.
\end{proof}

\begin{theorem} \label{thm:thm2harm}	For $N\geq 1$ and an arbitrary sequence of integers $a_j \in \Z$, $i\in \Z$ and a partition $\lambda=(\lambda_1,\dots,\lambda_h)$ with conjugate  $\lambda^\prime=(\lambda^\prime_1,\dots,\lambda^\prime_{\lambda_1})$ the following identity holds
	\[ \det(\, \zeta_N^{1-t}(a_{1-j}, a_{2-j}, \dots , a_{(\lambda_i+j-i)-j}) \,)_{1\leq i,j \leq h}= \det(\, \zeta_N^t(a_{j-1}, a_{j-2}, \dots , a_{j-(\lambda^\prime_i+j-i)}) \,)_{1\leq i,j \leq \lambda_1}\,,\]
	where we use the same definition of  $\zeta_N^t(a_m,\dots,a_n)$ for $n<m$ as before. 
\end{theorem}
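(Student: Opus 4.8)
The plan is to deduce this identity by combining the Jacobi--Trudi formula of Theorem~\ref{thm:thm1harm} with the conjugation duality of Proposition~\ref{prop:t1mt}; no further analysis of the $t$-Lattice Graph is required. First I would note that the right-hand side is, by Theorem~\ref{thm:thm1harm} applied to the sequence $(a_i)_{i\in\Z}$ and the partition $\lambda$, exactly $\zeta_N^t(\kk)$, where $\kk=(\lambda,(k_{i,j}))$ is the tableau with $k_{i,j}=a_{j-i}$; the stated conventions for empty and negatively indexed argument lists on the left-hand side are designed to match the degenerate-entry conventions ($1$ when $\lambda^\prime_i-i+j=0$, $0$ when $<0$) of that theorem.

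Next I would invoke Proposition~\ref{prop:t1mt}, which gives $\zeta_N^t(\kk)=\zeta_N^{1-t}(\ck)$, so that it suffices to show the left-hand determinant equals $\zeta_N^{1-t}(\ck)$. Here $\ck=(\lambda^\prime,(k^\prime_{i,j}))$ is the conjugate tableau, whose entries are $k^\prime_{i,j}=k_{j,i}=a_{i-j}$. To put $\ck$ into the normal form required by Theorem~\ref{thm:thm1harm}, I would introduce the reversed sequence $\tilde a_m:=a_{-m}$, so that $k^\prime_{i,j}=a_{i-j}=\tilde a_{j-i}$; thus $\ck$ satisfies the hypotheses of Theorem~\ref{thm:thm1harm} relative to $\tilde a$ and the partition $\lambda^\prime$. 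Since Theorem~\ref{thm:thm1harm} is an identity of polynomials in $t$, it holds verbatim after the substitution $t\mapsto 1-t$, so I may apply it at parameter $1-t$.

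Applying Theorem~\ref{thm:thm1harm} a second time, now to $\tilde a$ and $\lambda^\prime$ and using that conjugation is an involution (so $(\lambda^\prime)^\prime=\lambda$ and $\lambda^\prime_1=h$), expresses $\zeta_N^{1-t}(\ck)$ as the determinant over $1\le i,j\le h$ with $(i,j)$-entry
\[ \zeta_N^{1-t}(\tilde a_{j-1},\tilde a_{j-2},\dots,\tilde a_{j-(\lambda_i+j-i)}). \]
Unwinding $\tilde a_{j-k}=a_{k-j}$ turns this argument list into $a_{1-j},a_{2-j},\dots,a_{(\lambda_i+j-i)-j}$, which is precisely the entry appearing on the left-hand side of the theorem; moreover the empty case $\lambda_i+j-i=0$ and the vanishing case $\lambda_i+j-i<0$ coincide with the conventions, since these are literally the conditions $(\lambda^\prime)^\prime_i-i+j=0$ and $<0$. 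Chaining the three identities then yields the claim.

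The structural content is therefore only Theorem~\ref{thm:thm1harm} together with Proposition~\ref{prop:t1mt}, and I expect the sole genuine difficulty to be the index bookkeeping: verifying that reversing the sequence and conjugating the partition carries the indices produced by Theorem~\ref{thm:thm1harm} onto exactly those on the left-hand side, and that the degenerate $1$- and $0$-entries line up under this reindexing. This is routine but must be carried out carefully to avoid off-by-one errors in the ranges of the subscripts.
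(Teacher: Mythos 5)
Your proposal is correct and is essentially the same argument as the paper's: apply Theorem \ref{thm:thm1harm} twice (once at parameter $t$ for $\lambda$ and the sequence $(a_i)$, once at parameter $1-t$ for $\lambda'$ and the reversed sequence $\tilde a_m = a_{-m}$) and connect the two sides via Proposition \ref{prop:t1mt}. The paper compresses this into one sentence; your version merely makes explicit the reindexing and the matching of the degenerate $0$/$1$ conventions, which you carry out correctly.
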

\begin{proof}
Using the formula in Theorem \ref{thm:thm1harm} for the cases $t$ and $1-t$ we obtain this result together with Proposition \ref{prop:t1mt}, which stated that $t \rightarrow 1-t$ corresponds to $\lambda \rightarrow \lambda'$.
\end{proof}

Theorem \ref{thm:thm1} and \ref{thm:thm2} in the introduction are a direct consequence of Theorem \ref{thm:thm1harm} and \ref{thm:thm2harm} by taking the limit $N \rightarrow \infty$. The condition $a_j \geq 2$ is necessary for the convergence of the elements appearing in the matrices. 

\section{Generalization}
In this section we will discuss a possible generalization of our main results. For this let $A$ be a set, $\mathcal{R}$ a commutative ring and $f$ a map
\begin{align*}
f: A \times \N &\longrightarrow \mathcal{R} \\
(k,m) &\longmapsto f(k,m) \,.
\end{align*}
Define for $k_1,\dots,k_r \in A$ the element 
\begin{equation} \label{eq:generalf}
 \F^t_N(k_1,\dots,k_r) = \sum_{0 < m_1 \leq m_2 \leq \dots \leq m_r < N} t^{e(m_1,\dots,m_r)} f(k_1, m_1) \dots f(k_r, m_r)  \in \mathcal{R}[t]\,. 
 \end{equation}
One can also define the Schur version of these objects. For this we define for a Young tableau $\kk = (\lambda, (k_{i,j}))$ with $k_{i,j} \in A$
\begin{equation}\label{def:fsmzv}
 \F_N^t(\kk) = \sum_{\substack{\mm \in \OYT_N(\lambda)\\\mm = (\lambda, (m_{i,j}))}} t^{v(\mm)}  (1-t)^{h(\mm)} \prod_{(i,j) \in D(\lambda)} f(k_{i,j},m_{i,j}) \in \mathcal{R}[t]\,. 
\end{equation}
These Schur version also satisfy the statement of Proposition \ref{prop:t1mt}, i.e. $\F_N^t(\kk) = \F_N^{1-t}(\ck)$. Using the same technique as before we get the following result. 

\begin{theorem} \label{thm:final}For $N\geq 1$ and an arbitrary sequence $a_j \in A$, $i\in \Z$ and a partition $\lambda=(\lambda_1,\dots,\lambda_h)$ with conjugate  $\lambda^\prime=(\lambda^\prime_1,\dots,\lambda^\prime_{\lambda_1})$ and a Young tableau given by $\kk=(\lambda, (k_{i,j}))$ with $k_{i,j}=a_{j-i}$, the following identities hold in $\mathcal{R}[t]$
\begin{align*}
	 \F^t_N(\kk) &= \det(\, \F^t_N(a_{j-1}, a_{j-2}, \dots , a_{j-(\lambda^\prime_i+j-i)}) \,)_{1\leq i,j \leq \lambda_1}\\
	&= \det(\, \F_N^{1-t}(a_{1-j}, a_{2-j}, \dots , a_{(\lambda_i+j-i)-j}) \,)_{1\leq i,j \leq h}\,,
	\end{align*}
where we set $\F^t_N(a_m,\dots,a_n)=1$ for $n=m-1$ and $\F^t_N(a_m,\dots,a_n)=0$ when $n<m-1$. 
\end{theorem}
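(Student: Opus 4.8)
The plan is to repeat the entire graph-theoretic argument of Section 4 essentially verbatim, the key observation being that none of Lemmas \ref{lem:lem0}, \ref{lem:lem1} and \ref{lem:lem2} uses any special property of the weight $y^{-a_x}$ beyond the fact that it is an element of a commutative ring. Concretely, I would redefine the $t$-Lattice Graph $G^t_N$ over the ring $\mathcal R$ by keeping the five edge types of Definition \ref{def:tlattice} but replacing the weight $y^{-a_x}$ everywhere by $f(a_x,y)$; that is, $w(e^1_{x,y})=1$, $w(e^2_{x,y})=w(e^4_{x,y})=f(a_x,y)$ and $w(e^3_{x,y})=w(e^5_{x,y})=t\cdot f(a_x,y)$. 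The Lemma of Lindström, Gessel and Viennot (Lemma \ref{lem:LGV}) remains available since $\mathcal R$ is an arbitrary commutative ring.

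First I would establish the analogue of Lemma \ref{lem:lem0}, namely $w(A,B)=\F^t_N(a_j,\dots,a_i)$ for $A=v^\circ_{i,N-1}$ and $B=v^\circ_{j+1,0}$. The proof is the same induction on $N$: the recursion
\[ \F^t_N(a_j,\dots,a_i)=\sum_{g=0}^{r-1} t^g \sum_{m=1}^{N-1} f(a_i,m)\cdots f(a_{i+g},m)\,\F^t_m(a_j,\dots,a_{i+g+1}) \]
holds by the same splitting of the defining sum of \eqref{eq:generalf} according to the largest repeated value, and it matches the decomposition of a path into its initial segment $\{e^1\}^a e^2$ or $\{e^1\}^a e^3\{e^5\}^{g-1}e^4$. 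Next I would prove the analogue of Lemma \ref{lem:lem1}. Here the crucial point is that the sign-reversing pairing of vertex-disjoint path systems used in the second and third cases only exploits that the two paired paths traverse edges of equal $f$-weight and differ exactly by swapping a diagonal step (weight $f$) for a horizontal step (weight $t\cdot f$); hence the identities $\sign(\mathcal P)w(\mathcal P)=-t\,\sign(\mathcal P')w(\mathcal P')$ and $\sign(\mathcal P)w(\mathcal P)=-\sign(\mathcal P')w(\mathcal P')$ survive verbatim. Consequently the right-hand side of \eqref{eq:lem1} becomes $\big(\prod_{(i,j):\,f_{i,j}=1}f(a_{j-i},M)\big)\cdot t^{v_1(F)}(1-t)^{h_1(F)}$ when $F$ is $1$-ordered and $0$ otherwise.

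With these two lemmas in hand, the analogue of Lemma \ref{lem:lem2}, namely $\F^t_N(\kk)=\sum_{\mathcal P:\mathcal A_\lambda\to\mathcal B_\lambda}\sign\mathcal P\cdot w(\mathcal P)$, follows by the same induction on $N$: one splits a path system at height $N-2$, records which cells are filled by $N-1$ via a tuple $\mathfrak d$, applies the generalized Lemma \ref{lem:lem1} with $M=N-1$ to the top strip and the induction hypothesis to the bottom, and matches this termwise with the decomposition of the defining sum \eqref{def:fsmzv} of $\F^t_N(\kk)$. Applying Lemma \ref{lem:LGV} to $\mathcal A_\lambda,\mathcal B_\lambda$ and evaluating $w(A_i,B_j)$ by the generalized Lemma \ref{lem:lem0} (with the boundary cases $w(A_i,B_j)=1$ when $\lambda'_i+j-i=0$ and $w(A_i,B_j)=0$ when $\lambda'_i+j-i<0$) yields the first determinant formula exactly as in Theorem \ref{thm:thm1harm}. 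The second determinant formula then follows by applying the first one to the conjugate partition $\lambda'$ with parameter $1-t$ and invoking the stated generalization $\F_N^t(\kk)=\F_N^{1-t}(\ck)$ of Proposition \ref{prop:t1mt}.

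I expect the only genuine content, as opposed to routine transcription, to be the verification that the sign-reversing involution of Lemma \ref{lem:lem1} is weight-agnostic: one must check that replacing $y^{-a_x}$ by $f(a_x,y)$ does not spoil the cancellation, which hinges solely on the two diagonal edge types sharing the common weight $f(a_x,y)$ and on the two horizontal edge types carrying the same extra factor $t$. Once this is noted, every remaining step is obtained from Section 4 under the substitution $m^{-k}\rightsquigarrow f(k,m)$.
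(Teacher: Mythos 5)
Your proposal is correct and is essentially identical to the paper's own proof: the paper also proves Theorem \ref{thm:final} by replacing the edge weights $y^{-a_x}$ of the $t$-Lattice Graph with $f(a_x,y)$ (keeping $w(e^1)=1$ and the factor $t$ on horizontal edges), observing that Lemmas \ref{lem:lem0}, \ref{lem:lem1} and \ref{lem:lem2} remain valid, and then applying the Lindstr\"om--Gessel--Viennot Lemma over the commutative ring $\mathcal{R}[t]$ together with the conjugation symmetry $\F_N^t(\kk)=\F_N^{1-t}(\ck)$. Your write-up is in fact more detailed than the paper's, since you explicitly verify the points (the product form $f(a_i,m)\cdots f(a_{i+g},m)$ in the recursion and the weight-agnostic sign-reversing pairing) that the paper leaves implicit.
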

\begin{proof}
To get this result one just needs to modify the weights of the edges of the $t$-Lattice Graph in Definition \ref{def:tlattice} ii)  by $w(e^1_{x,y}) = 1$, $w(e^2_{x,y}) = w(e^4_{x,y}) = f(a_x,y)$ and $w(e^3_{x,y}) = w(e^5_{x,y}) = t \cdot f(a_x,y)$. With this the statements of the Lemmas \ref{lem:lem0}, \ref{lem:lem1} and \ref{lem:lem2} are still valid. Therefore the Lemma of Lindstr\"om, Gessel, Viennot can be applied, since $\mathcal{R}[t]$ is commutative, to get an analogue version of Theorem \ref{thm:thm1harm} and Theorem \ref{thm:thm2harm}, which then implies the above statement.
\end{proof}

\begin{ex}
\begin{enumerate}[i)]
\item To obtain our (truncated) interpolated Schur multiple zeta values we choose $A=\Z$, $\mathcal{R}=\Q$ and $f(k,m)=m^{-k}$. Of course we could also choose $A=\C$, which was done in \cite{NPY}. The limit $N\rightarrow \infty$ then exists in the cases $\Re(a_j) > 1$, which follows by the well-known convergence regions of multiple zeta functions. 
\item In \cite{W} Wakabayashi introduces interpolated $q$-analogues of multiple zeta values given for $k_1,\dots,k_{r-1} \geq 1$ and $k_1 \geq 2$ by\footnote{The order of summation is different in \cite{W} and the Definition 1 there looks different to ours. But it can be checked easily that the objects are, up to the order of summation, the same.}
\begin{align*}
\zeta^t_q(k_1,\dots,k_r) = \sum_{0<m_1\leq\cdots \leq m_r} \frac{t^{e(m_1,\dots,m_r)} q^{(k_1-1)m_1+\cdots +(k_r-1)m_r}}{[m_1]_q^{k_1}\cdots [m_r]_q^{k_r}} \in \Q[[q]][t]\,,
\end{align*}
where $[m]_q = \frac{1-q^m}{1-q} = 1+ q+\dots+q^{m-1}$ denotes the usual $q$-analogue of a natural number $m$. In the case $q \rightarrow 1$ these give back the interpolated multiple zeta values $\zeta^t(k_1,\dots,k_r)$. Defining their truncated version $\zeta^t_{q,N}(k_1,\dots,k_r)$ in the obvious way and choosing $A=\N$, $\mathcal{R}=\Q[[q]]$ and $f(k,m)=q^{m(k-1)} [m]_q^{-k}$ we obtain algebraic relations for these interpolated $q$-analogues after taking the limit $N \rightarrow \infty$. 
\item Similar to Section 5 in \cite{NPY}, we can take $A=\N$, $\mathcal{R} = \Z[[ x_1,x_2,\dots]]$ and $f(k,m)=x^k_{m}$. In this case $\F^t_N(k_1,\dots,k_r)$ is an interpolaion between the monomial quasi-symmetric function $\F^0_N(k_1,\dots,k_r)$ and the essential quasi-symmetric function $\F^1_N(k_1,\dots,k_r)$. 
\end{enumerate}
\end{ex}
Besides these examples the author is not aware of any interpolated object of type $\eqref{eq:generalf}$ or their Schur versions $\eqref{def:fsmzv}$.
\end{document}